\newtheorem{theorem}{Theorem}[section]
\newtheorem{lemma}[theorem]{Lemma}
\newtheorem{proposition}[theorem]{Proposition}
\newtheorem{corollary}[theorem]{Corollary} 
 \theoremstyle{remark}
\newtheorem{remark}[theorem]{Remark}
\newcommand{\E}{\mathbb{E}}
\newcommand{ \blue   }{\color{black}}
\newcommand{  \red  }{\color{black}}
\newcommand{ \black }{\color{black} }
\newcommand{ \kat}{\color{black} }
\begin{document}

\title[Randomized exponential integrator for modulated NLS]{Randomized exponential integrators for modulated nonlinear Schr\"odinger equations}

\author{Martina Hofmanov\'a}
\address[M. Hofmanov\'a]{Fakult\"at f\"ur Mathematik, Universit\"at Bielefeld, Postfach 10 01 31, 33501 Bielefeld, Germany}
\email{hofmanova@math.uni-bielefeld.de}

\author{Marvin Kn\"oller}
\address{Fakult\"{a}t f\"{u}r Mathematik, Karlsruhe Institute of Technology,
Englerstr. 2, 76131 Karlsruhe, Germany}
\email{marvin.knoeller@kit.com}

\author{Katharina Schratz}
\address{Fakult\"{a}t f\"{u}r Mathematik, Karlsruhe Institute of Technology,
Englerstr. 2, 76131 Karlsruhe, Germany}
\email{katharina.schratz@kit.edu}

\thanks{}

\begin{abstract}
We consider the nonlinear Schr\"odinger equation with dispersion modulated by a (formal) derivative of a time-dependent function  with fractional Sobolev regularity of class $W^{\alpha,2}$ for some $\alpha\in (0,1)$. Due to the  loss of smoothness in  the problem  classical numerical methods face severe order reduction.
In this work, we develop and analyze a new randomized  exponential integrator  based on a stratified Monte Carlo approximation. The new discretization technique averages the high oscillations in the solution allowing for   improved  convergence rates  of order $\alpha+1/2$.  In addition, the new approach allows us to treat a far more general class of modulations than  the available literature.  Numerical results underline our theoretical findings and show the favorable error behavior of our new scheme compared to classical methods.
\end{abstract}

\subjclass[2010]{}

\keywords{}

\date{\today}

\maketitle

\section{Introduction}

We study  modulated nonlinear Schr\"odinger equations of the form
\begin{equation}\label{mnls}
 i \partial_t u(t,x) + \Delta u(t,x) \partial_t g(t) = \left| u(t,x)\right |^2 u(t,x), \quad u(0,x) = u_0(x),\quad (t,x) \in \mathbb{R} \times \mathbb{T}^d,
\end{equation}
where $g:\mathbb{R}\to\mathbb{R}$ is an arbitrary function of time  with fractional Sobolev regularity of class $W^{\alpha,2}$ (see \eqref{sobi} for the definition) with $\alpha\in (0,1)$.

Classical semilinear Schr\"odinger equations with $g(t) = t$ are nowadays extensively studied  numerically. In this context, splitting methods (where the right-hand side is split into the kinetic and nonlinear part, respectively) as well as exponential integrators (based on approximating Duhamel's formula) contribute particularly attractive classes of integration schemes. For an extensive overview on splitting and exponential integration methods we refer to \cite{HLW,HochOst10,HLRS10,McLacQ02}, and for their rigorous convergence analysis in the context of semilinear Schr\"odinger equations we refer to \cite{BeDe02,CCO08,CoGa12,Duj09,Faou12,Gau11,Lubich08} and the references therein.

In the last decades, the modulated Schr\"odinger equation \eqref{mnls} has  gained a lot of attention in physics  serving, e.g.,  as a model describing  propagation of light waves in optical dispersion-managed fibers (see for instance \cite{ABP00,GC15, dBD10,DT,HL09,HL12,KMZ, ZGJT}). Numerically, however, only very little is known so far for this type of problem.  The highly oscillatory nature of the problem makes  the construction and analysis of numerical schemes for~\eqref{mnls} particularly challenging in  non-smooth regimes, where $g$ is   of fractional Sobolev regularity of class $W^{\alpha,2}$  with exponent $\alpha \in(0, 1)$. Due to the loss of smoothness in the system  classical numerical schemes (e.g., splitting methods, exponential integrators, etc.)  face severe order reduction. This is due to the fact that their rate of convergence heavily depends on the smoothness of $g$.  To allow a reliable approximation classical numerical schemes are thus subject to severe step size restrictions which leads to large errors and huge computational costs in case of  non-smooth modulations $g$. We refer to \cite{EFHI09,HLW} for an extensive overview on the numerical analysis of highly oscillatory problems, and in particular to \cite{iserles,jin} and the references therein for the numerical analysis of semiclassical Schr\"odinger equations.    Recently, the case of $g$ being a Brownian motion,  a stochastic process that has in particular $\alpha$-H\"older continuous trajectories for every $\alpha\in (0,1/2)$,   has  gained a lot attention in numerical analysis. Various numerical schemes have been proposed reaching from  Crank-Nicolson discretizations \cite{BdBD} over splitting schemes \cite{M} up to exponential integrators \cite{CD}. A rigorous error analysis of these schemes could be established based on probabilistic arguments, i.e., stating the `rate of convergence with certain probability' (see \eqref{prob} below for the detailed convergence bound). Whereas the used probabilistic arguments are applicable to Brownian motion they do not allow for an extension to more general modulations $g$  nor  for deterministic error bounds.
 Up to our knowledge, nothing is known  in the  numerical analysis literature so far for a general modulated dispersion $ \partial_t g(t) \Delta $  and, in particular, deterministic (pathwise) error bounds are still lacking.\black
 
  In this work we consider for the first time the general case where the modulation $g$ is  of fractional Sobolev regularity of class $W^{\alpha,2}$  in time  and develop a new randomized exponential integration scheme for the modulated Schr\"odinger equation \eqref{mnls} in this general setting.  For technical reasons, we also require that $g$ belongs to the H\"older space $C^{\varepsilon}$ for an arbitrarily small $\varepsilon\in (0,1)$. \blue This allows us to prove the a priori boundedness of the numerical solution in the classical Sobolev space $H^\sigma$ (cf. \cite{Lubich08}). Note
\black  that if $\alpha>1/2$ then according to  the Sobolev embedding there exists $\varepsilon\in (0,1)$ such that $W^{\alpha,2}\subset C^{\varepsilon}$, so the additional assumption does not pose any further restriction.

 Our new technique allows us to average the high oscillations in the solution and gain $1/2$ in the convergence rate. More precisely, we can firstly   establish    improved pathwise  convergence rates of order $\alpha+1/2$  for the new scheme  (see \eqref{errK} below for the precise error estimate).  In contrast, classical schemes \blue require much stronger smoothness assumptions as they rely
  on the H\"older continuity of $g$. Even if  $g$ is $\alpha$-H\"older continuous (and not only in $W^{\alpha,2}$) classical schemes face severe order reduction down to order $\alpha$ due to the loss of smoothness in the system. These theoretical findings are underlined by numerical experiments: while the error of classical schemes oscillates widely, reaching large errors, our proposed randomized exponential integrator allows us to average these oscillations maintaining its convergence rate with much smaller  and reliable errors, see Section~\ref{sec:num}.


For completeness, note that randomized numerical schemes in various settings have already appeared in the literature,  in particular in the context of ordinary differential equations and recently also in case of uncertainty quantifications \cite{Assyr}\black . Let us particularly mention \cite{EKKL,JN09, KW, KWa} which inspired our research and where further references can be found.  However, to the best of our knowledge such a method was not yet applied in the context of dispersive equations such as \eqref{mnls}. In \cite{GC15}, the modulated Schr\"odinger equation of type \eqref{mnls} has been studied from the analytical point of view. Thereby an Euler-like procedure was used to prove existence and construct solutions under certain regularity assumption on $g$. However, numerical analysis was not the objective of this work.

\textbf{New approach in a nutshell.} In the present paper we put forward an exponential integrator for \eqref{mnls} based on a stratified Monte Carlo approximation. More precisely, we consider the mild formulation of \eqref{mnls} and approximate the convolution integral appearing on the right hand side by means of a randomized Riemann sum. The key idea is to choose the randomization in such a way, that the associated error is a (discrete time) martingale, which permits to apply the so-called Burkholder-Davis-Gundy inequality (see Theorem \ref{bdg}). Remarkably, this allows to gain half in the convergence rate in expectation. More precisely, splitting the time interval $[0, T]$ into an equidistant partition $t_0, \ldots, t_N$ with the mesh
size $\tau = \frac{T}{N}$, for some $N\in\mathbb{N}$, we establish the following bound for the difference of the exact solution $u$ at time $t_{n}$ and the numerical solution  $u^n$ valid for initial conditions in $H^{\sigma+2}$ with $\sigma>d/2$ (see Theorem \ref{glob} for details):
\begin{equation}\label{errK}
\left( \E_\xi\max_{n=0,\dots,N}\|u(t_n)-u^n\|_{H^{\sigma}}^2\right)^{1/2} \leq c \tau^{\min\{1,{\alpha+\frac{1}{2}}\}},
\end{equation}
where $\E_\xi$ is the expected value associated to the randomization of the numerical scheme  and $\alpha$ is the exponent of   fractional Sobolev regularity \black of $g$. Classical numerical schemes  are in contrast restricted to the H\"older continuity, see also Remark \ref{rem:classic} below.\black

\textbf{Comparison with previous results.} We stress that the expectation above is taken only with respect to the randomization of the scheme, whereas the modulation $g$ is deterministic. In other words, in the case of a random modulation, such as the Brownian motion treated in \cite{BdBD,CD}, our result provides  {\em pathwise} error estimates leading to a {\em pathwise} convergence analysis. In addition, it applies to a wide range of possible modulations, deterministic or random, and in particular to any other stochastic process with  trajectories of fractional Sobolev regularity, overcoming the limitations of \cite{GC15}.

 Note that in \cite{BdBD}  a semi-discrete Crank-Nicolson scheme is analyzed. The strong order of convergence in
probability in $H^\sigma$, $\sigma>d/2$, is equal to one for initial conditions in $H^\sigma$ for this scheme. In \cite{CD} an explicit exponential integrator is proposed and it is shown that it is mean-square order 1 in $H^\sigma$ for initial conditions in $H^{\sigma+4}$.

In both works \cite{BdBD,CD}, a stochastic approach was used and it was only possible to  obtain the order of convergence $1$ in probability which reads as
\begin{align}\label{prob}
\lim_{C\to\infty}\mathbb{P}_W\big( \|u(t_n)-u^n\|_\sigma>C\tau\big) =0,
\end{align}
for all $n=0,\dots,N$ uniformly with respect to $\tau$. Here we write  $\mathbb{P}_W$ to denote the probability measure on the probability space where the Brownian motion $g=W$ is defined (in contrast to the notation $\E_\xi$ above which only concerns the artificial randomness introduced in our scheme). It was conjectured in \cite{BdBD} that a stronger result should hold, namely,
$$
\lim_{C\to\infty}\mathbb{P}_W\Big(\max_{n=0,\dots,N} \|u(t_n)-u^n\|_\sigma>C\tau\Big) =0,
$$
uniformly with respect to $\tau$. However, the proof would require  very tedious computations which were not presented.
Our approach allows to overcome this challenge and the convergence analysis is rather elegant and simple. See Remark \ref{rem:12} below.


\textbf{Organization of the paper.} 
Our exponential integrator is derived in Section \ref{sec:deriv}.  In Section~\ref{sec:conv} we carry out a rigorous  error analysis for the new scheme.  \kat In Section \ref{sec:num} we present numerical experiments for $g$ belonging to  $W^{\alpha,2}$ for various $\alpha \in (0,1)$.

For practical implementation issues we impose periodic boundary conditions, i.e, $x \in \mathbb{T}^d= [0,2\pi]^d$, $d\in\mathbb{N}$. For notational simplicity we work with cubic nonlinearities. However, the generalization to  polynomial nonlinearities $f(u) = \vert u\vert^{2p} u$ is straightforward.

\bigskip

\textbf{Notation.} 
In the following let $\sigma>d/2$. We denote by $\Vert \cdot \Vert_\sigma$ the standard $H^\sigma= H^\sigma(\mathbb{T}^d)$ Sobolev norm, that is, if $f\in H^{\sigma}$ such that $f(x)= \sum_{k \in \mathbb{Z}^d} \hat{f}_k \mathrm{e}^{i k\cdot x}$, $x\in \mathbb{T}^{d}$, then
$$
\|f\|_{\sigma}=\left(\sum_{k\in\mathbb{Z}^{d}}(1+|k|^{2})^{\sigma}|\hat f_{k}|^{2}\right)^{1/2}.
$$
In particular, we exploit the well-known bilinear estimate
\begin{equation}\label{bil}
\Vert f_1 f_2 \Vert_\sigma \leq c_{\sigma} \Vert f_1 \Vert_\sigma \Vert f_2 \Vert_\sigma
\end{equation}
which holds for some constant $c_{\sigma}>0$ and every $f_{1},f_{2}\in H^{\sigma}$.
The space of $\varepsilon$-H\"older continuous functions for $\varepsilon\in (0,1)$ is denoted by $C^{\varepsilon}$, whereas
$C([0,T];H^{\sigma})$ denotes the space of continuous functions with values in $H^{\sigma}$.
 Furthermore, for $\alpha\in (0,1)$ we let $W^{\alpha,2}$ denote  the fractional Sobolev space on $(0,T)$ (also called the Sobolev-Slobodeckij space) given by the norm
\begin{equation}\label{sobi}
\Vert g \Vert_{W^{\alpha,2}} = \left(\int_0^T \vert g(t)\vert^2 \mathrm{d}t + \int_0^T \int_0^T 
\frac{\vert g(r)-g(s)\vert^2}{\vert r - s \vert ^{2\alpha+1}}\mathrm{d}r\mathrm{d}s\right)^\frac{1}{2}.
\end{equation}\black


For two quantities $a,b$, we use the notation $a\lesssim b$ to say that there exists a constant $c>0$ such that $a\leq cb$. This proportionality constant will typically depend on data of the problem and/or certain norms of the exact solution, which we always specify below. However, it will always be independent of the step size $\tau$ and $N$.


\section{Derivation of the numerical scheme}
\label{sec:deriv}

It is possible to make sense of \eqref{mnls} using the associated mild formulation. To this end, let us set
\[S(t) = S^g(t) = e^{i g (t) \partial^2_x},\quad t\in\mathbb{R},\]
as well as
\[U (t, r) = S(t) S(r)^{-
1} = e^{i [g (t) - g (r)] \partial^2_x}\]
such that in particular $U(t,0) = S(t)$. Note that due to the presence of the modulation $g$, the problem \eqref{mnls} is not time-homogeneous. Consequently, the linear part of \eqref{mnls} generates an evolution system given by the family of operators $U(t,r) $, $r,t\in\mathbb{R}$, which are (generally) not functions of the difference $t-r$ as it would be case in the classical setting, that is, $g(t)=t$. Intuitively, $U(t,r)$ describes the evolution of the linear part of \eqref{mnls} from time $r$ to time $t$. Note that since the modulation $g$ does not depend on the space variable $x$, the above operators are  Fourier multipliers given by 
$$\mathcal{F}[S(t)f](k)=e^{-i g(t)|k|^2}\hat{f}_k,\qquad \mathcal{F}[U(t,r)f](k)=e^{-i [g(t)-g(r)]|k|^2}\hat{f}_k$$
for $f\in L^{2}$ such that  $f(x) = \sum_{k \in \mathbb{Z}^d} \hat{f}_k \mathrm{e}^{i k\cdot x}$, $x\in \mathbb{T}^{d}$. 

In the construction and analysis of our numerical scheme we will in particular exploit that  the operators $U(t,r)$ and $S(t)$ are linear isometries in  $H^\sigma$ for all $r, t \in \mathbb{R}$:
\begin{lemma}\label{lem:iso}
 For all $f \in H^\sigma$ and $r,t \in \mathbb{R}$ we have that
\begin{equation}\label{uni}
\Vert U(t,r) f \Vert_\sigma = \Vert f \Vert_\sigma, \quad \Vert S(t) f \Vert_\sigma = \Vert f \Vert_\sigma.
\end{equation}
\end{lemma}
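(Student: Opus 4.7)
\medskip

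\textbf{Proof plan.} The plan is to reduce everything to the definition of the $H^\sigma$ norm in terms of Fourier coefficients and exploit the fact that $U(t,r)$ and $S(t)$ act as Fourier multipliers with symbols of modulus one.

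First, I would fix $f\in H^\sigma$ with Fourier expansion $f(x)=\sum_{k\in\mathbb{Z}^d}\hat f_k\,e^{ik\cdot x}$ and recall from the identities stated just before the lemma that
\[
\mathcal{F}[U(t,r)f](k) = e^{-i[g(t)-g(r)]|k|^2}\hat f_k,\qquad \mathcal{F}[S(t)f](k) = e^{-ig(t)|k|^2}\hat f_k.
\]
Since $g$ is real-valued, the factors $e^{-i[g(t)-g(r)]|k|^2}$ and $e^{-ig(t)|k|^2}$ have modulus one for every $k\in\mathbb{Z}^d$, hence
\[
\bigl|\mathcal{F}[U(t,r)f](k)\bigr| = |\hat f_k| = \bigl|\mathcal{F}[S(t)f](k)\bigr|.
\]

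Next, inserting these identities into the definition of $\|\cdot\|_\sigma$ gives
\[
\|U(t,r)f\|_\sigma^2 = \sum_{k\in\mathbb{Z}^d}(1+|k|^2)^\sigma\bigl|\mathcal{F}[U(t,r)f](k)\bigr|^2 = \sum_{k\in\mathbb{Z}^d}(1+|k|^2)^\sigma|\hat f_k|^2 = \|f\|_\sigma^2,
\]
and the analogous computation for $S(t)$. Taking square roots yields \eqref{uni}. Linearity of $U(t,r)$ and $S(t)$ is immediate from their definitions as composed exponentials of the (linear) operator $\partial_x^2$, or equivalently from the linearity of the Fourier multiplier action.

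There is no genuine obstacle here; the only thing to check with care is that the symbol is a unimodular complex number, which relies on $g(t)\in\mathbb{R}$. No smoothness of $g$ is needed for this lemma, only its real-valuedness. The result then extends pointwise in $(t,r)$ and will be used throughout the forthcoming error analysis to discard the propagators whenever $H^\sigma$ norms are estimated.
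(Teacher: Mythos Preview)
Your proposal is correct and follows essentially the same approach as the paper: both use the Fourier multiplier representation of $S(t)$ (and hence $U(t,r)$) together with the definition of the $H^\sigma$ norm, exploiting that the symbol $e^{-ig(t)|k|^2}$ has modulus one. Your write-up is slightly more explicit in noting that the real-valuedness of $g$ is what makes the symbol unimodular, but the argument is the same.
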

\begin{proof}
The assertion follows by the definition of the $H^\sigma$ norm together with the relation
\[
\mathrm{e}^{i g(t)\partial_x^2} f(x) = \sum_{k \in \mathbb{Z}^d} \mathrm{e}^{- i g(t)|k|^2} \hat{f}_k \mathrm{e}^{i k\cdot x}
\]
\kat which holds \black for all $f\in   L^2$ such that $f(x) = \sum_{k \in \mathbb{Z}^d} \hat{f}_k \mathrm{e}^{i k \cdot x}$, $x\in \mathbb{T}^{d}$. 
\end{proof}

We point out that a well-posedness theory of \eqref{mnls} under the generality assumed in the present manuscript is very challenging and remains an open problem. Partial results were given in \cite{GC15} covering for instance the case of a fractional Brownian motion $g$ and $d=1$. Our numerical study may help to better understand the properties of \eqref{mnls} for a general class of modulations $g$ with  fractional Sobolev regularity \black and might set a starting point for further analytical investigations.

%

Our  goal is to derive a numerical scheme for the mild solution of \eqref{mnls} given by Duhamel's formula
\begin{equation}\label{eq:mild}
 u (t) = U (t, 0) u_0 - i \int_0^t U (t, r) [| u (r) |^2 u (r)] d r,\qquad
 t\in\mathbb{R}.
\end{equation}
Using the flow property $U(t,\xi)=U(t,r)U(r,\xi)$ valid for all $\xi\leq r\leq t$, we deduce that it  satisfies
\[ u(t) = U (t, r) u (r) - i \int_r^t U (t, \xi) [| u (\xi) |^2 u (\xi)] d \xi. \]
Let us now split $[0, T]$ into an equidistant partition $t_0, \ldots, t_N$, with the mesh
size $\tau = \frac{T}{N}$, for some $N\in\mathbb{N}$, which yields  that
\begin{equation}\label{exact}
\begin{aligned}
u(t_{n+1})  
 = U(t_n+\tau,t_n) u (t_n) - i \int_0^{\tau}
U(t_n+\tau,t_n+r)\left [|u(t_n+r) |^2 u(t_n+r)\right] d r.
   \end{aligned}
   \end{equation}
In order to numerically approximate the above time integral, we first use the approximation
$$
u(t_n+r)\approx U(t_n+r,t_n)u(t_n)
$$  
which is of order $r$ in $H^\sigma$ for solutions $u\in C([0,T];H^\sigma)$:
\begin{lemma}\label{lem:1}
Fix  $\sigma>d/2$. Let $u$ be a mild solution to \eqref{mnls} such that $u\in C([0,T];H^{\sigma})$.
Then for all $0 \leq t \leq T$ and $0\leq r\leq \tau$ we have
\[
\left\Vert u(t+r) - U(t+r,t) u(t) \right\Vert_\sigma \lesssim r,
\]
where the proportionality constant depends on $\mathrm{sup}_{0 \leq t \leq T} \Vert u(t)\Vert_\sigma$, but can be chosen uniformly in $r$.
\end{lemma}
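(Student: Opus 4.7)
The plan is to start from the mild formulation \eqref{eq:mild} rewritten between $t$ and $t+r$, and estimate the remainder using the isometry property of the evolution operator together with the bilinear estimate in $H^\sigma$.

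First, I would apply the identity derived in the excerpt right after \eqref{eq:mild}, namely
\begin{equation*}
u(t+r) = U(t+r,t)\,u(t) - i\int_0^r U(t+r, t+\xi)\bigl[|u(t+\xi)|^2 u(t+\xi)\bigr]\,d\xi,
\end{equation*}
which holds for all $0\le t\le T$ and $0\le r\le \tau$. Subtracting $U(t+r,t)u(t)$ from both sides and taking the $H^\sigma$ norm, I would use the triangle inequality for the Bochner integral to move the norm inside.

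Next, by Lemma \ref{lem:iso} the operator $U(t+r,t+\xi)$ is an isometry on $H^\sigma$, so it can be discarded under the norm. This reduces the problem to bounding $\int_0^r \||u(t+\xi)|^2 u(t+\xi)\|_\sigma\, d\xi$. I would then invoke the bilinear (algebra) estimate \eqref{bil}, applied twice since $\sigma>d/2$, to get $\||u(t+\xi)|^2 u(t+\xi)\|_\sigma \lesssim \|u(t+\xi)\|_\sigma^3$. Bounding $\|u(t+\xi)\|_\sigma$ by $\sup_{0\le s\le T}\|u(s)\|_\sigma$ and integrating over $[0,r]$ yields the desired estimate $\lesssim r$, with the proportionality constant depending only on $\sup_{0\le s\le T}\|u(s)\|_\sigma$ as claimed.

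There is no real obstacle here: the argument is a textbook Duhamel-type bound and the only nontrivial inputs are the isometry from Lemma \ref{lem:iso} and the algebra property \eqref{bil}, both already available. The main thing to watch is that the bound is uniform in $r\in[0,\tau]$, which is automatic since the cubic nonlinearity is controlled by the supremum norm of $u$ on the whole interval $[0,T]$ and the integrand is thus bounded uniformly in $\xi$.
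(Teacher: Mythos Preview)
Your proposal is correct and follows essentially the same approach as the paper: write the mild formulation between $t$ and $t+r$, subtract $U(t+r,t)u(t)$, and bound the resulting integral using the isometry of $U$ together with the bilinear estimate \eqref{bil} to obtain $\int_0^r \|u(t+\xi)\|_\sigma^3\,d\xi \lesssim r$. The paper's proof is slightly terser (it leaves the use of the isometry implicit), but the argument is the same.
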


\begin{proof}
\kat Note that the mild formulation \eqref{exact} yields that
\[
u(t_n+r)  
 = U(t_n+r,t_n) u (t_n) - i \int_0^{r}
U(t_n+r,t_n+\xi)\left [|u(t_n+\xi) |^2 u(t_n+\xi)\right] d \xi.
\]
Together with the bilinear estimate \eqref{bil} this implies that \black
\begin{align*}
\left\Vert u(t_n+r) - U(t_n+r,t) u(t_n) \right\Vert_\sigma \lesssim \black
\int_0^r \left \Vert u(t_n+\xi)\right\Vert_\sigma^3 d\xi
\end{align*}
\kat which yields the assertion. \black
\end{proof}
The above Lemma allows us to define the approximation
\begin{equation}\label{app1}
u_\ast(t_{n+1}) :=  U(t_n+\tau,t_n) u (t_n) - i \int_0^{\tau}
U(t_n+\tau,t_n+r)f \left(U(t_n+r,t_n)u(t_n)\right)  d r, 
\end{equation}
where we let $ f(z) = \vert z \vert^2 z$ for $z\in\mathbb{C}$.
This approximates the exact solution $u$ at time $t_{n+1}$ with order $\tau^2$: 

\begin{corollary}\label{cor:1}
Fix  $\sigma>d/2$. Let $u$ be a mild solution to \eqref{mnls} such that $u\in C([0,T];H^{\sigma})$.
Then the following approximation holds
\begin{align}\label{est:app1}
\left \Vert u(t_{n+1}) -  u_\ast(t_{n+1})\right\Vert_\sigma \lesssim \tau^2
\end{align}
where the proportionality constant depends on $\mathrm{sup}_{0 \leq t \leq T} \Vert u(t)\Vert_\sigma$, but  is independent of $\tau$.
\end{corollary}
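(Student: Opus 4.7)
The plan is to subtract the two representations directly and use the isometry property of $U$ together with a Lipschitz bound for the cubic nonlinearity $f$.

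First I would write out the difference explicitly. Subtracting \eqref{app1} from \eqref{exact} yields
\begin{equation*}
u(t_{n+1}) - u_\ast(t_{n+1}) = -i\int_0^\tau U(t_n+\tau, t_n+r)\bigl[f(u(t_n+r)) - f(U(t_n+r,t_n)u(t_n))\bigr]\,dr,
\end{equation*}
since the linear parts $U(t_n+\tau,t_n)u(t_n)$ cancel. Taking the $H^\sigma$ norm, pulling it inside the integral, and invoking Lemma~\ref{lem:iso} to remove the outer $U(t_n+\tau,t_n+r)$ gives
\begin{equation*}
\left\|u(t_{n+1}) - u_\ast(t_{n+1})\right\|_\sigma \leq \int_0^\tau \bigl\|f(u(t_n+r)) - f(U(t_n+r,t_n)u(t_n))\bigr\|_\sigma\,dr.
\end{equation*}

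Next I would use that $f(z)=|z|^2 z$ is locally Lipschitz on $H^\sigma$: the algebraic identity $f(a)-f(b) = (a-b)|a|^2 + b(\bar a - \bar b)a + |b|^2(a-b)$ together with the bilinear estimate \eqref{bil} gives
\begin{equation*}
\|f(a)-f(b)\|_\sigma \lesssim \|a-b\|_\sigma\bigl(\|a\|_\sigma^2 + \|b\|_\sigma^2\bigr).
\end{equation*}
Applying this with $a=u(t_n+r)$ and $b=U(t_n+r,t_n)u(t_n)$, and using Lemma~\ref{lem:iso} to bound $\|b\|_\sigma = \|u(t_n)\|_\sigma \leq \sup_{0\leq t \leq T}\|u(t)\|_\sigma$, we obtain
\begin{equation*}
\bigl\|f(u(t_n+r)) - f(U(t_n+r,t_n)u(t_n))\bigr\|_\sigma \lesssim \bigl\|u(t_n+r) - U(t_n+r,t_n)u(t_n)\bigr\|_\sigma,
\end{equation*}
with a proportionality constant depending only on $\sup_{0\leq t\leq T}\|u(t)\|_\sigma$.

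Finally I would invoke Lemma~\ref{lem:1} to bound the right-hand side by $\lesssim r$, so that
\begin{equation*}
\left\|u(t_{n+1}) - u_\ast(t_{n+1})\right\|_\sigma \lesssim \int_0^\tau r\,dr = \tfrac{1}{2}\tau^2,
\end{equation*}
which is exactly the claimed bound. There is no real obstacle here: the proof is an essentially mechanical combination of the isometry property, the bilinear estimate, and Lemma~\ref{lem:1}. The only minor point worth care is verifying that the proportionality constant indeed depends only on $\sup_{0\leq t\leq T}\|u(t)\|_\sigma$ and not on $\tau$ or $n$, which is automatic since all inputs to the estimates are uniformly controlled on $[0,T]$.
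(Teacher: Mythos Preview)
Your proof is correct and follows essentially the same route as the paper's: subtract \eqref{app1} from \eqref{exact}, pull the norm inside the integral, use the isometry of $U$ and the Lipschitz bound for $f$ coming from \eqref{bil}, and then apply Lemma~\ref{lem:1} to obtain the $O(r)$ bound on the integrand. The paper's own argument is terser (it simply writes the integral inequality and says ``the assertion thus follows by Lemma~\ref{lem:1}''), but the underlying steps are identical to yours.
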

\begin{proof}
\kat Taking the difference of the exact solution \eqref{exact} and the approximation \eqref{app1} yields with $f(z) = \vert z \vert^2 z$, $z\in\mathbb{C}$, that
\begin{align*}
\Vert u(t_{n+1}) - u_\ast(t_{n+1}) \Vert_\sigma \leq \int_0^\tau
 \left  \Vert 
 U(t_n+\tau,t_n+r) \Big[ f\big(U(t_n+r,t_n)u(t_n)\big) - f\big(u(t_n+r)\big)\Big]
 \right \Vert_\sigma  dr.
\end{align*}
The assertion thus follows by Lemma \ref{lem:1}.
\black
\end{proof}
Thanks to Corollary \ref{cor:1} it remains to find a suitable numerical approximation of \eqref{app1}.
To this end, we consider a stratified Monte Carlo approximation of the resulting time integral. More precisely, let  $(\xi_n)_{n\in\mathbb{N}_0}$ be a sequence of independent identically distributed random variables having the uniform distribution on $[0,1]$. We assume that the sequence $(\xi_n)_{n\in\mathbb{N}_0}$ is defined on some underlying probability space $(\Omega,\mathcal{F},\mathbb{P})$ and we denoted by $\mathbb{E}$ the associated expected value. That is, for every $n\in\mathbb{N}_0$, the mapping  $\xi:(\Omega,\mathcal{F})\to ([0,1],\mathcal{B}([0,1]))$ is measurable and for a measurable function $F:\mathbb{R}\to \mathbb{R}$ it holds
$$
\E F(\xi_n) = \int_\Omega F(\xi_n(\omega))\,d\mathbb{P}(\omega)=\int_0^1 F(\xi)\,d\xi,
$$
where the last equality follows by a change of variables from the fact that  $\xi_n$ is uniformly distributed in $[0,1]$.

Then we approximate as follows
\begin{equation}
\begin{aligned}\label{tay}
&\int_0^{\tau}
U(t_n+\tau,t_n+r)\left [| U(t_n+r,t_n)u(t_n) |^2 U(t_n+r,t_n)u(t_n)\right] d r\\
&\qquad \approx\tau U(t_n+\tau,t_n+\tau\xi_n)\left [| U(t_n+\tau\xi_n,t_n)u(t_n) |^2 U(t_n+\tau\xi_n,t_n)u(t_n)\right].
\end{aligned}
\end{equation}

Plugging the above approximation into \eqref{app1}  \kat motivates us to define  the  \emph{randomized exponential integrator}\black
\begin{equation}\label{scheme00}
\begin{aligned} u^{n + 1}  &= U(t_n+\tau,t_n) u^n 
- i \tau U(t_n+\tau,t_n+\tau\xi_n) \left[| U(t_n+\tau\xi_n,t_n)\kat u^n\black |^2 U(t_n+\tau\xi_n,t_n)\kat u^n\black\right]
 \end{aligned}
  \end{equation}
 \kat approximating the mild solution $u$ at time $t_{n+1}$ of the modulated nonlinear Schr\"odinger equation~\eqref{mnls}. \black

\section{Convergence analysis}
\label{sec:conv}
In this section we carry out the convergence analysis of the randomized exponential integrator~\eqref{scheme0}.  Our main result reads as follows.

\begin{theorem}\label{glob}
Fix  $\sigma>d/2$. Let $g\in   W^{\alpha,2}\cap C^{\varepsilon}$ for some $\alpha\in (0,1)$ and an arbitrarily small $\varepsilon\in (0,1)$. Let $u$ be a mild solution to \eqref{mnls} such that $u\in C([0,T];H^{\sigma+2})$. Then there exits $\tau_0 >0$ such that for all $\tau \leq \tau_0$ it holds true that for all $t_N \leq T$
\begin{align*}
\left(\E\max_{M=0,\dots, N}\left\| u(t_M)-u^M\right\|_\sigma^2\right)^{1/2}&\lesssim\tau^{\min\{1,\alpha+\frac{1}{2}\}},
\end{align*}
where the proportionality constant depends on $T$, $\|g\|_{ W^{\alpha,2}}$,\red $\|g\|_{C^{\varepsilon}}$\black and $\sup_{0 \leq t \leq T} \Vert u(t)\Vert_{{\sigma+2}}$, but is independent of $\tau$ and $N$.
\end{theorem}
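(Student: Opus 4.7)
The plan is to compare $u(t_{n+1})$ with one step of the scheme applied to the exact solution, $\mathcal{S}(u(t_n),\xi_n)$, thereby decomposing the error $e_n := u(t_n)-u^n$ into a deterministic local truncation error $L_n := u(t_{n+1}) - \bar u_*(u(t_n))$, a conditionally mean-zero randomization term $R_n := \bar u_*(u(t_n)) - \mathcal{S}(u(t_n),\xi_n)$, and a stability contribution $\mathcal{S}(u(t_n),\xi_n) - \mathcal{S}(u^n,\xi_n)$, where $\bar u_*(w) := U(t_n+\tau,t_n)w - i\int_0^\tau U(t_n+\tau,t_n+r)f(U(t_n+r,t_n)w)\,dr$ denotes the averaged (non-randomized) one-step update. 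Corollary \ref{cor:1} yields $\|L_n\|_\sigma \lesssim \tau^2$ immediately, and the bilinear estimate \eqref{bil}, the isometry of Lemma \ref{lem:iso} and Lipschitz continuity of $f$ on $H^\sigma$-balls produce $\mathcal{S}(u(t_n),\xi_n)-\mathcal{S}(u^n,\xi_n) = U(t_n+\tau,t_n)e_n + \tilde S_n$ with $\|\tilde S_n\|_\sigma \lesssim \tau\|e_n\|_\sigma$, provided $\|u^n\|_\sigma$ is controlled a priori.

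Iterating this one-step identity and collapsing the composition of propagators via the flow property $U(t_M,t_{n+1})U(t_{n+1},t_n)=U(t_M,t_n)$ yields
$$e_M = \sum_{n=0}^{M-1} U(t_M,t_{n+1})\bigl(L_n + \tilde S_n + R_n\bigr).$$
The key observation is that since $u(t_n)$ is deterministic and $\xi_n$ is uniform on $[0,1]$ and independent of $\mathcal F_{n-1}:=\sigma(\xi_0,\ldots,\xi_{n-1})$, a short calculation gives $\E[R_n\mid\mathcal F_{n-1}]=0$. Factoring out $U(t_M,t_0)$ on the left of the random sum and invoking Lemma \ref{lem:iso} to preserve $H^\sigma$-norms, I recast $\sum_{n=0}^{M-1}U(t_M,t_{n+1})R_n$ as an isometric image of the genuine $H^\sigma$-valued martingale $\tilde C_M := \sum_{n=0}^{M-1}U(t_0,t_{n+1})R_n$. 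The Burkholder-Davis-Gundy inequality (Theorem \ref{bdg}) together with the orthogonality of martingale increments in the Hilbert space $H^\sigma$ then gives $\E\max_{M\le N}\|\tilde C_M\|_\sigma^2 \lesssim \sum_{n=0}^{N-1}\E\|R_n\|_\sigma^2$; this is precisely the place where the gain of half a power of $\tau$ will materialise.

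It remains to control $\E\|R_n\|_\sigma^2$, and this is where the fractional Sobolev regularity of $g$ enters. Writing $R_n = \int_0^\tau [F(r)-F(\tau\xi_n)]\,dr$ with $F(r):=U(t_n+\tau,t_n+r)\,f(U(t_n+r,t_n)u(t_n))$, Cauchy-Schwarz together with the change of variables $s=\tau\xi$ under the expectation yields $\E\|R_n\|_\sigma^2 \le \int_0^\tau\int_0^\tau\|F(r)-F(s)\|_\sigma^2\,dr\,ds$. Splitting $F(r)-F(s)$ via the flow property into a propagator increment and a nonlinearity increment, and using $\|(e^{i\theta\Delta}-\mathrm{Id})h\|_\sigma \le |\theta|\,\|h\|_{\sigma+2}$ together with \eqref{bil} applied in $H^{\sigma+2}$, one derives $\|F(r)-F(s)\|_\sigma \lesssim |g(t_n+r)-g(t_n+s)|$ with implicit constant depending on $\sup_{0\le t\le T}\|u(t)\|_{\sigma+2}$. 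Since $|r-s|\le\tau$, I may freely insert the weight $\tau^{2\alpha+1}|r-s|^{-2\alpha-1}\ge 1$ into the integrand, and summing the resulting non-overlapping integrals over $n$ bounds them by the full $W^{\alpha,2}$-seminorm of $g$, giving $\sum_{n=0}^{N-1}\E\|R_n\|_\sigma^2 \lesssim \tau^{2\alpha+1}\|g\|_{W^{\alpha,2}}^2$.

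Inserting the three bounds into the iterated identity, applying Lemma \ref{lem:iso} term by term, and running a discrete Gronwall argument to absorb the stability contribution leaves $\E\max_{M\le N}\|e_M\|_\sigma^2 \lesssim \tau^2 + \tau^{2\alpha+1}$, which after a square root is exactly the announced rate $\tau^{\min\{1,\alpha+1/2\}}$. The main technical obstacle I anticipate is closing the a priori boundedness of $\|u^n\|_\sigma$ (together with the implicit $H^{\sigma+2}$-boundedness hidden in the constant of the randomization estimate): this must be handled by a bootstrap induction on $n$, in which one feeds the already-derived local error bound into a pathwise stability inequality for the scheme, and this is precisely where the auxiliary regularity $g\in C^\varepsilon$ is invoked to control the propagators over a single step (cf.\ \cite{Lubich08}).
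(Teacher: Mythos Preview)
Your proposal is correct and follows essentially the same strategy as the paper: decompose the error into a local truncation piece (Corollary~\ref{cor:1}), a mean-zero randomization piece handled via the martingale structure and the Burkholder--Davis--Gundy inequality together with the $W^{\alpha,2}$ seminorm of $g$, and a stability piece absorbed by discrete Gronwall, with the a priori bound on $\|u^n\|_\sigma$ supplied by the classical $C^\varepsilon$-based argument. The only cosmetic differences are that the paper passes to the twisted variable $v=S^{-1}u$ (which removes the $M$-dependent propagator you factor out by hand) and carries out the randomization estimate Fourier-mode-by-mode via the identity $K=2(k-k_2)\cdot(k-k_3)$, whereas you obtain the same bound more directly through the operator inequality $\|(e^{i\theta\Delta}-\mathrm{Id})h\|_\sigma\le|\theta|\,\|h\|_{\sigma+2}$.
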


\begin{remark}[Classical order of convergence]\label{rem:classic}  Let $g \in   W^{\alpha,2}\cap C^{\varepsilon}$ \black for some $\alpha\in (0,1)$ and an arbitrarily small $\varepsilon\in(0,1)$.  With classical techniques we can readily derive the {classical order of convergence} for the randomized exponential integrator \eqref{scheme00} approximating solutions of the modulated nonlinear Schr\"odinger equation~\eqref{mnls}: Taylor series expansion implies that for all $0 \leq \xi \leq 1$ and $0 \leq r \leq \tau$ it holds that
\[
\left\Vert \big(S(t_n+r) - S(t_n+\xi \tau) \big) f \right\Vert_\sigma \leq \big\vert g(t_n+r) - g(t_n+\xi \tau)\big\vert \Vert f \Vert_{\sigma+2} \leq \|g\|_{C^{\varepsilon}} \tau^\varepsilon \Vert f \Vert_{\sigma+2}.
\]
Applying the above estimate in \eqref{tay} we observe together with Corollary \ref{cor:1} that the randomized exponential integrator \eqref{scheme00} introduces a local error of order
\begin{equation}\label{classloc}
\tau^{\min\{1+\varepsilon,2\}} = \tau^{1+\varepsilon}
\end{equation}
where the last equality follows as $\varepsilon \leq 1$. The isometric property in Lemma \ref{lem:iso} together with the bilinear estimate \eqref{bil} furthermore allows the stability estimate \begin{equation}\label{classstab}
\begin{aligned} 
\Vert u^{n + 1} \Vert_\sigma &\leq  \Vert u^n \Vert_\sigma + c \tau \Vert u^n\Vert_\sigma
\end{aligned}
\end{equation}
where the constant $c$ depends on $\Vert u^n\Vert_\sigma^2$, but can be chosen independently of $\tau$ and $\xi_n$. Thanks to a Lady Windermere's fan argument (see \cite{HLW}) we obtain by the local error \eqref{classloc} together with the stability estimate \eqref{classstab} that there exists a $\tau_0>0$ such that for all $\tau \leq \tau_0$ the global error bound holds
\begin{equation}\label{class:glob}
\Vert u(t_{n+1}) - u^{n+1}\Vert_\sigma \leq c \tau^\varepsilon \quad\text{with} \quad c = c \left(\mathrm{sup}_{0 \leq t \leq t_n}\Vert u(t)\Vert_{\sigma+2}\right).
\end{equation}
\end{remark}

\begin{remark}[A priori bounds]\label{rem:apriori}  Let $g\in   W^{\alpha,2}\cap C^{\varepsilon}$ with $\alpha\in (0,1)$ and arbitrarily small $\varepsilon\in (0,1)$. The classical global error bound \eqref{class:glob} in particular implies the a priori boundedness of $\Vert u^n\Vert_\sigma$ for  all $(\xi_n)_{n\in\mathbb{N}_0}$ in $[0,1]$ and solutions $u \in C([0,T];H^{\sigma+2})$.
\end{remark}

\red
\begin{remark}\label{rem:eps}
We point out that the additional assumption $g\in C^{\varepsilon}$ for some $\varepsilon\in (0,1)$ was only necessary in order to obtain the a priori boundedness of the numerical solution. In other words, the maximal step size $\tau_{0}$ depends on $\varepsilon$ and  the proportionality constant in our main error bound in Theorem \ref{glob} depends on $\|g\|_{C^{\varepsilon}}$. Nevertheless, the established order of convergence is $\alpha+1/2$ independently of the value of $\varepsilon$.
\end{remark}
\black

In Proposition \ref{prop:1} below we will present the essential estimate needed for the proof \kat of \black the main error result Theorem \ref{glob}. It is based on the following discrete version of the Burkholder-Davis-Gundy  inequality (see \cite{B66}).

\begin{theorem}\label{bdg}
For each $p \in (0,\infty)$ there exist positive constants $c_p$ and $C_p$ such
that for every discrete time $\mathbb{R}^d$-valued martingale $\{X_n;\,n\in\mathbb{N}_0\}$ and for every $n\in\mathbb{N}_0$ we have
$$
c_p \E\langle X\rangle_n^{p/2}\leq \E\max_{k=1,\dots,n}|X_k|^p\leq C_p \E \langle X\rangle_n^{p/2},
$$
where $\langle X\rangle_n= |X_0|^2 + \sum_{k=1}^n |X_k-X_{k-1}|^2$ is the quadratic variation of $\{X_n;\,n\in\mathbb{N}_0\}$.
\end{theorem}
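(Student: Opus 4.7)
The plan is to establish the two inequalities separately, first reducing to a convenient normalization and then handling the scalar-valued case before extending to $\mathbb{R}^d$. I would begin by replacing $X_n$ with $X_n - X_0$, which is still a martingale; this absorbs the $|X_0|^2$ term in $\langle X\rangle_n$ into an additive constant that only alters $c_p$ and $C_p$. Under this reduction, the anchor case $p=2$ is immediate from the orthogonality of martingale differences $d_k := X_k - X_{k-1}$: since $\mathbb{E}[d_k \mid \mathcal{F}_{k-1}] = 0$, cross terms vanish and
\[
\mathbb{E}|X_n|^2 \;=\; \sum_{k=1}^n \mathbb{E}|d_k|^2 \;=\; \mathbb{E}\langle X\rangle_n,
\]
while Doob's $L^2$ maximal inequality upgrades this identity to the desired bound on $\mathbb{E}\max_k |X_k|^2$, with explicit constants.

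For general $p \in (0,\infty)$ I would use Burkholder's good-$\lambda$ method. Writing $X^* := \max_{k\leq n}|X_k|$ and $S := \langle X\rangle_n^{1/2}$, the key distributional comparison to establish is
\[
\mathbb{P}\bigl(X^* > \beta\lambda,\ S \leq \delta\lambda\bigr) \;\leq\; \phi(\beta,\delta)\,\mathbb{P}(X^* > \lambda),
\]
for all $\lambda>0$, with $\beta>1$ fixed and $\phi(\beta,\delta)\to 0$ as $\delta\downarrow 0$. The proof introduces the stopping times $\tau_1 := \inf\{k: |X_k| > \lambda\}$ and $\tau_2 := \inf\{k: \langle X\rangle_k^{1/2} > \delta\lambda\}$, considers the martingale stopped at $\tau_1\wedge\tau_2$, and applies Chebyshev together with the $p=2$ identity above to its increments. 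Multiplying the good-$\lambda$ estimate by $p\lambda^{p-1}$ and integrating over $\lambda\in(0,\infty)$ yields $\|X^*\|_p \leq C_p \|S\|_p$ for every $p>0$; the symmetric argument, with the roles of $X^*$ and $S$ exchanged, delivers the reverse inequality.

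The main technical obstacle is the good-$\lambda$ step itself, because for $p \geq 1$ one could alternatively chain Doob's $L^p$ maximal inequality with Burkholder's inequality for martingale transforms, but this route excludes the range $p \in (0,1)$ covered by the theorem, so the stopping-time comparison is essentially unavoidable if one wants the full range of $p$. Extension from $\mathbb{R}$-valued to $\mathbb{R}^d$-valued martingales is then immediate: throughout the argument only the Euclidean norm $|d_k|$, the orthogonality-in-expectation $\mathbb{E}\langle d_j, d_k\rangle = 0$ for $j\neq k$, and scalar Chebyshev bounds on $|X_k|$ enter, so the same proof (and the same constants $c_p, C_p$) applies verbatim with $|\cdot|$ interpreted as the Hilbert-space norm.
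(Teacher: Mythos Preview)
The paper does not supply its own proof of this theorem: it is quoted as a known result, with a reference to Burkholder~\cite{B66}, and is used as a black box in the proof of Proposition~\ref{prop:1}. So there is no paper proof to compare against.

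Your plan is a correct outline of the standard route to the discrete Burkholder--Davis--Gundy inequality. The reduction to $X_0=0$, the $p=2$ identity via orthogonality of martingale differences combined with Doob's maximal inequality, and the good-$\lambda$ comparison for general $p\in(0,\infty)$ are exactly the ingredients one finds in textbook treatments. Two small points worth tightening if you write this out in full: first, in discrete time the stopped martingale may overshoot the level $\lambda$ at the stopping time $\tau_1$, but on the event $\{S\leq\delta\lambda\}$ the maximal increment is automatically bounded by $\delta\lambda$ (since each $|d_k|^2$ is a summand of $S^2$), which is what makes the Chebyshev step go through; you should make this explicit. Second, the ``symmetric argument'' for the lower bound---the good-$\lambda$ inequality with the roles of $X^*$ and $S$ exchanged---is not quite symmetric in its details, because $S$ is not adapted in the same way $X^*$ is; one typically introduces the stopping time $\sigma:=\inf\{k:\langle X\rangle_{k+1}^{1/2}>\lambda\}$ (predictable threshold) and controls $X$ stopped at $\sigma$. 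Both are routine but deserve a sentence each.
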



Let us introduce the twisted variable
\begin{align}\label{twist}
v(t):=S(t)^{-1}u(t),\qquad v_\ast(t):=S(t)^{-1}u_\ast(t).
\end{align}
In terms of this new variable, the  approximation $u_\ast$ is given by
\begin{align}\label{vast}
v_*(t_{n+1}) = v (t_n) -i  \int_0^{\tau}
S(t_n+r)^{-1}\left [| S(t_n+r)v(t_n) |^2 S(t_n+r)v(t_n)\right] d r,
\end{align}
whereas the numerical solution $v^n=S(t_n)^{-1}u^n$ satisfies
\begin{equation}\label{scheme0}
\begin{aligned} 
v^{n + 1}  &:= v^n 
- i \tau S(t_n+\tau\xi_n)^{-1} \left[| S(t_n+\tau\xi_n)v^n |^2 S(t_n+\tau\xi_n)v^n\right].
 \end{aligned}
  \end{equation}

%

The following result does not require the additional assumption $g\in C^{\varepsilon}$.

\begin{proposition}\label{prop:1}
Let $g\in   W^{\alpha,2}$ for some $\alpha\in (0,1)$. Then it holds true
\begin{align*}
&\E\max_{M=0,\dots,N}\bigg\|\sum_{n=0}^M\int_0^{\tau}
S(t_n+r)^{-1}\left[| S(t_n+r)v(t_n) |^2 S(t_n+r)v(t_n)\right] d r\\
&\qquad -\tau \sum_{n=0}^M S(t_n+\tau\xi_n)^{-1}\left[| S(t_n+\tau\xi_n)v(t_n) |^2 S(t_n+\tau\xi_n)v(t_n)\right]\bigg\|^2_\sigma\\
&\lesssim \tau^{2\alpha+1}\sup_{0\leq t\leq T}\|v(t)\|^6_{\sigma+2},
\end{align*}
where the proportionality constant depends on $T$, $\|g\|_{ W^{\alpha,2}}$ but is independent of $\tau$ and $N$.
\end{proposition}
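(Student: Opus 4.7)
The core observation is that the summand in the proposition is a martingale difference, which permits the use of the Burkholder-Davis-Gundy inequality (Theorem \ref{bdg}). Set $\mathcal{F}_{n}:=\sigma(\xi_{0},\dots,\xi_{n-1})$, abbreviate
$$G_{n}(r):=S(t_{n}+r)^{-1}\bigl[|S(t_{n}+r)v(t_{n})|^{2}\,S(t_{n}+r)v(t_{n})\bigr],\qquad Y_{n}:=\int_{0}^{\tau}G_{n}(r)\,dr-\tau\,G_{n}(\tau\xi_{n}),$$
and $M_{M}:=\sum_{n=0}^{M}Y_{n}$. Since $v(t_{n})$ is deterministic and $\xi_{n}$ is uniform on $[0,1]$ independent of $\mathcal{F}_{n}$, the substitution $r=\tau\xi$ in the expectation gives $\E[Y_{n}\mid\mathcal{F}_{n}]=0$, so $\{M_{M}\}$ is an $H^{\sigma}$-valued discrete martingale.

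Applying Theorem \ref{bdg} with $p=2$ (equivalently, Doob's maximal inequality combined with orthogonality of martingale increments in the Hilbert space $H^{\sigma}$), followed by Cauchy-Schwarz in the time variable and the change of variables $s=\tau\xi$ in the $\xi_{n}$-expectation, I reduce matters to
$$\E\max_{M=0,\dots,N}\|M_{M}\|_{\sigma}^{2}\lesssim\sum_{n=0}^{N}\E\|Y_{n}\|_{\sigma}^{2}\lesssim\sum_{n=0}^{N}\int_{0}^{\tau}\!\!\int_{0}^{\tau}\|G_{n}(r)-G_{n}(s)\|_{\sigma}^{2}\,ds\,dr.$$

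The heart of the argument is the pointwise-in-time bound
$$\|G_{n}(r)-G_{n}(s)\|_{\sigma}\lesssim|g(t_{n}+r)-g(t_{n}+s)|\,\|v(t_{n})\|_{\sigma+2}^{3},\qquad(\star)$$
which I prove by a Fourier-side computation. Since $S(t)^{\pm 1}$ acts as the Fourier multiplier by $e^{\mp i g(t)|k|^{2}}$, the $k$-th mode of $G_{n}(r)$ is a trilinear expression over $k=-k_{1}+k_{2}+k_{3}$ carrying the phase factor $e^{ig(t_{n}+r)\Phi}$ with resonance phase $\Phi=|k|^{2}+|k_{1}|^{2}-|k_{2}|^{2}-|k_{3}|^{2}$. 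Using $|e^{ix}-e^{iy}|\leq|x-y|$ together with the coarse bound $|\Phi|\lesssim|k_{1}|^{2}+|k_{2}|^{2}+|k_{3}|^{2}$ (from $|k|^{2}\leq3(|k_{1}|^{2}+|k_{2}|^{2}+|k_{3}|^{2})$), the phase difference is controlled by $|g(t_{n}+r)-g(t_{n}+s)|$ times a weight that places two extra derivatives on one of the three Fourier factors. Splitting the sum accordingly and iterating the bilinear estimate \eqref{bil} on each of the three resulting terms yields $(\star)$.

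Finally, combining $(\star)$ with the reduction above, and using $|r-s|\leq\tau$ on each sub-square $[t_{n},t_{n+1}]^{2}$ together with the disjointness of these sub-squares in $[0,T]^{2}$,
$$\sum_{n=0}^{N}\int_{t_{n}}^{t_{n+1}}\!\!\int_{t_{n}}^{t_{n+1}}|g(r)-g(s)|^{2}\,ds\,dr\leq\tau^{2\alpha+1}\int_{0}^{T}\!\!\int_{0}^{T}\frac{|g(r)-g(s)|^{2}}{|r-s|^{2\alpha+1}}\,dr\,ds\lesssim\tau^{2\alpha+1}\|g\|_{W^{\alpha,2}}^{2}$$
by the definition \eqref{sobi} of the fractional Sobolev norm. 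Assembling the pieces produces the claimed bound. The principal obstacle is the pointwise estimate $(\star)$: the cubic nonlinearity must be resolved via the resonance phase in a way that isolates exactly two derivatives on one of the Fourier factors, which is precisely what forces the $H^{\sigma+2}$ norm (rather than $H^{\sigma}$) on the right-hand side and ultimately produces the $\tau^{1/2}$ gain in the convergence rate over classical schemes.
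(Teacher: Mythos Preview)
Your proof is correct and follows essentially the same strategy as the paper: identify the sum as a discrete martingale, apply the Burkholder--Davis--Gundy inequality to reduce to a sum of second moments, control each increment via the phase difference $e^{ig(t_n+r)\Phi}-e^{ig(t_n+s)\Phi}$, and absorb the resulting factor $|\Phi|$ into two extra spatial derivatives before invoking the $W^{\alpha,2}$ seminorm of $g$. The only cosmetic differences are that the paper applies BDG Fourier-mode by Fourier-mode (so that Theorem~\ref{bdg} applies verbatim to $\mathbb{R}^2$-valued martingales) rather than directly at the $H^\sigma$ level, and that the paper uses the factorization $\Phi=2(k-k_2)\cdot(k-k_3)$ in place of your cruder bound $|\Phi|\lesssim|k_1|^2+|k_2|^2+|k_3|^2$; both routes land on the same $\|v\|_{\sigma+2}^6$ control.
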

\begin{proof}
Since $| u |^2 u = u^2 \cdot \bar{u}$, we write
\[ \int_0^{\tau} S(t_n+r)^{-1} \left[|S(t_n+r) v (t_n) |^2 S(t_n+r)v (t_n)\right] d r \]
\[ = \int_0^{\tau} S(t_n+r)^{-1}\left[ (S(t_n+r)v (t_n))^2 \overline{S(t_n+r){v} (t_n)}\right] d r=:I(\tau). \]
We intend to estimate the error of the stratified Monte Carlo approximation of the above integral $I(\tau)$ introduced above. Namely, in view of the discussion in Section~\ref{sec:deriv}, it is approximated by
\begin{align*}
I(\tau)&\approx  \tau S(t_n+\tau\xi_n)^{-1}\left [| S(t_n+\tau\xi_n)v(t_n) |^2 S(t_n+\tau\xi_n)v(t_n)\right]\\
&=\tau S(t_n+\tau\xi_n)^{-1}\left [( S(t_n+\tau\xi_n)v(t_n) )^2 \overline{S(t_n+\tau\xi_n){v}(t_n)}\right]=:J(\tau)
\end{align*}
As the next step, we observe that for $x\in \mathbb{T}^{d}$
\[I(\tau,x) = \sum_{\substack{k, k_1, k_2, k_3 \in \mathbb{Z}^d\\ k = - k_1 + k_2 + k_3}} e^{i k\cdot x}
   \left( \int_0^{\tau} e^{i g (t_n + r) [|k|^2 + |k_1|^2 - |k_2|^2 - |k_3|^2]}
   \hat{v}_{k_2} (t_n) \hat{v}_{k_3} (t_n) \widehat{\bar{v}}_{k_1} (t_n) d r
   \right)  \]
   and similarly
$$
J(\tau,x)= \tau \sum_{\substack{k, k_1, k_2, k_3 \in \mathbb{Z}^d\\ k = - k_1 + k_2 + k_3}} e^{i k \cdot x}
   \left( e^{i g (t_n + \tau\xi_n) [|k|^2 + |k_1|^2 - |k_2|^2 - |k_3|^2]}
   \hat{v}_{k_2} (t_n) \hat{v}_{k_3} (t_n) \widehat{\bar{v}}_{k_1} (t_n) 
   \right) .
$$

For $k,k_{1},k_{2},k_{3}\in\mathbb{Z}^d$ such that $k=-k_{1}+k_{2}+k_{3}$, let us denote $K :=K(k_{1},k_{2},k_{3}):= |k|^2 + |k_1|^2 - |k_2|^2 - |k_3|^2$, where for notational simplicity we omit the dependence of $K$ on its parameters. The same convention will be used in the estimates below. We  denote $\hat{V}_K (t_n) :=
\hat{v}_{k_2} (t_n) \hat{v}_{k_3} (t_n) \kat \overline{\hat{v}}_{k_1} (t_n)$ and define the error
\begin{equation}\label{eq:err}
E^M_K:= \sum_{n = 0}^M \left( \int_0^{\tau} e^{i g (t_n + r) K} -
   e^{i g (t_n + \tau \xi_n) K} d r \hat{V}_K (t_n) \right) ,\quad M=0,\dots,N.
\end{equation} 
  Here
     $$
\tau \sum_{n=0}^M  e^{i g (t_n + \tau \xi_n) K} 
   $$
appearing in the second summand of $E^M_K$  can be regarded as a randomized Riemann sum approximation of the integral
  $$
  \int_0^{M\tau}e^{i g(t_n+r)K} dr .
  $$

   Next, we will show that for every fixed $K$ defined above, $E^M_K$ defines a discrete martingale with respect to the parameter $ M=0,\dots,N$ and the filtration $(\mathcal{F}_M)_{M=0,\dots,N}$ given by $\mathcal{F}_M:=\sigma (\xi_i;i=0,\dots,M)$. This will then allow us to apply the Burkholder-Davis-Gundy inequality, Theorem \ref{bdg}. As it will be seen below, the martingale property  is a consequence of the way the randomization  $(\xi_n)_{n\in\mathbb{N}_0}$ was chosen.
   Since all $\xi_n$, $n\in\mathbb{N}_0,$  are independent and uniformly distributed in the interval $[0,1]$, it follows that
   \begin{align}\label{eq:2}
   \begin{aligned}
  & \E\left[ \tau e^{i g (t_n + \tau \xi_n) K} \hat{V}_K (t_n)  \right]= \tau \E\left[e^{i g (t_n + \tau \xi_n) K}\right] \hat{V}_K (t_n)\\
   &\quad=\tau \int_0^1 e^{i g (t_n + \tau \xi) K}d\xi\hat{V}_K (t_n)= \int_0^\tau e^{i g (t_n + r) K}dr\hat{V}_K (t_n).
   \end{aligned}
   \end{align}
   As a consequence
   \begin{align*}
   \E\left[E^M_K\right]&= \sum_{n = 0}^M  \E\left[ \int_0^{\tau} e^{i g (t_n + r) K} -
   e^{i g (t_n + \tau \xi_n) K} d r \hat{V}_K (t_n) \right]\\
   &=\sum_{n = 0}^M   \left(\int_0^{\tau} e^{i g (t_n + r) K}dr- \tau \E\left[  e^{i g (t_n + \tau \xi_n) K} \right]  \right)\hat{V}_K (t_n)=0.
   \end{align*}
   In addition, by definition of $E^M_K$ we deduce that for every $M=0,\dots,N$ the random variable $E^M_K$ is measurable with respect to $\mathcal{F}_M$. Hence the stochastic process $E^M_K$, $M=0,\dots,N,$ is adapted to the filtration $(\mathcal{F}_M)_{M=0,\dots,N}$. To finally verify the martingale property, we let $M\geq m$ and compute the conditional expectation $\E\left[E^M_K|\mathcal{F}_m\right]$. It holds
   \begin{align*}
   \E\left[E^M_K|\mathcal{F}_m\right]&=\sum_{n = 0}^M \E\left[  \int_0^{\tau} e^{i g (t_n + r) K} -
   e^{i g (t_n + \tau \xi_n) K} d r \hat{V}_K (t_n)\Big|\mathcal{F}_m \right]\\
   &=E^m_K+\sum_{n = m+1}^M \E\left[  \int_0^{\tau} e^{i g (t_n + r) K} -
   e^{i g (t_n + \tau \xi_n) K} d r \hat{V}_K (t_n)\Big|\mathcal{F}_m \right]\\
   &=E^m_K+\sum_{n = m+1}^M \left( \int_0^{\tau} e^{i g (t_n + r) K}dr -\tau \E\left[ 
   e^{i g (t_n + \tau \xi_n) K} \big|\mathcal{F}_m \right]\right)\hat{V}_K (t_n)\\
   &=E^m_K+\sum_{n = m+1}^M \left( \int_0^{\tau} e^{i g (t_n + r) K}dr -\tau \E\left[ 
   e^{i g (t_n + \tau \xi_n) K}  \right]\right)\hat{V}_K (t_n)\\
   &=E^m_K,
   \end{align*}
where we used the adaptedness of $E^m_K$, properties of the conditional expectation, independence of $\xi_n$    as well as \eqref{eq:2}. Thus $E^M_K$, $ M=0,\dots,N$, is a martingale with respect to  $(\mathcal{F}_M)_{M=0,\dots,N}$.
   
Hence, we may apply the Parseval identity and the Burkholder-Davis-Gundy inequality, Theorem \ref{bdg}, to obtain
\begin{align*}
&\E \max_{M=0,\dots,N} \left\|x\mapsto  \sum_{\substack{k, k_1, k_2, k_3 \in \mathbb{Z}^d\\ k = - k_1 + k_2 + k_3}}e^{i k \cdot x} E^M_K\right\|_{\sigma}^2\\
& \leq \sum_{k\in\mathbb{Z}^d} (1+|k|^2)^{\sigma}\mathbb{E} \max_{M=0,\dots,N}  \left|
   \sum_{n = 0}^M \sum_{\substack{k_1,k_2,k_3\in\mathbb{Z}^d\\k=-k_1+k_2+k_3}}
   \int_0^{\tau} e^{i g (t_n + r) K^{}} - e^{i g (t_n + \tau \xi_n) K}
   d r \hat{V}_K (t_n)  \right|^2\\
&\lesssim\sum_{k\in\mathbb{Z}^d}(1+|k|^2)^{\sigma} \mathbb{E} \sum_{n = 0}^N  \left|\sum_{\substack{k_1,k_2,k_3\in\mathbb{Z}^d\\k=-k_1+k_2+k_3}}
   \int_0^{\tau} e^{i g (t_n + r) K^{}} - e^{i g (t_n + \tau \xi_n) K^{}} d r
   \hat{V}_K (t_n) \right|^2.
   \end{align*}
Next, we have by the Minkowski integral inequality
\begin{align}\label{eq:err2}
\begin{aligned}
&\left( \mathbb{E}  \left| \sum_{\substack{k_1,k_2,k_3\in\mathbb{Z}^d\\k=-k_1+k_2+k_3}}
   \int_0^{\tau} e^{i g (t_n + r) K^{}} - e^{i g (t_n + \tau \xi_n) K} d r
   \hat{V}_K (t_n) \right|^2 \right)^{1 / 2} \\
  & \quad\leq \sum_{\substack{k_1,k_2,k_3\in\mathbb{Z}^d\\k=-k_1+k_2+k_3}}\left(\E\left| \int_0^\tau  e^{i g (t_n + r) K^{}} - e^{i g (t_n + \tau \xi_n) K}dr\hat{V}_K (t_n) \right|^2\right)^{1/2}\\
  & \quad\leq \tau^{1/2}\sum_{\substack{k_1,k_2,k_3\in\mathbb{Z}^d\\k=-k_1+k_2+k_3}}\left(\E \int_0^\tau \left| e^{i g (t_n + r) K^{}} - e^{i g (t_n + \tau \xi_n) K}\hat{V}_K (t_n) \right|^2dr\right)^{1/2}\\
  & \quad\leq \tau^{1/2}\sum_{\substack{k_1,k_2,k_3\in\mathbb{Z}^d\\k=-k_1+k_2+k_3}}\left(\E \int_0^\tau \left| g (t_n + r)  -  g (t_n + \tau \xi_n) \right|^2dr\right)^{1/2}|K||\hat{V}_K (t_n)|\\
  & \quad= \sum_{\substack{k_1,k_2,k_3\in\mathbb{Z}^d\\k=-k_1+k_2+k_3}}\left(\int_{0}^{\tau} \int_0^\tau \left| g (t_n + r)  -  g (t_n + s) \right|^2dr\,ds\right)^{1/2}|K||\hat{V}_K (t_n)|\\
  & \quad\leq \tau^{\alpha+1/2}\sum_{\substack{k_1,k_2,k_3\in\mathbb{Z}^d\\k=-k_1+k_2+k_3}}\left(\int_{0}^{\tau} \int_0^\tau \frac{\left| g (t_n + r)  -  g (t_n + s) \right|^2}{|r-s|^{2\alpha+1}}dr\,ds\right)^{1/2}|K||\hat{V}_K (t_n)|\\
   & \quad\leq \tau^{\alpha+1/2}\|g\|_{ W^{\alpha,2}(t_{n},t_{n}+\tau)}\sum_{\substack{k_1,k_2,k_3\in\mathbb{Z}^d\\k=-k_1+k_2+k_3}} |K||\hat{V}_K (t_n) |.
     \end{aligned}
\end{align}
Therefore the final error is estimated using the bilinear estimate \eqref{bil} together with the fact that $K = |k|^2 + |k_1|^2 - |k_2|^2 - |k_3|^2 = 2 (k - k_2) \cdot (k - k_3)$ as follows
\begin{align*}
&\E \max_{M=0,\dots,N} \left\|x\mapsto \sum_{\substack{k, k_1, k_2, k_3 \in \mathbb{Z}^d\\ k = - k_1 + k_2 + k_3}}e^{i k\cdot  x} E^M_K\right\|_{\sigma}^2\\
&\quad\lesssim \tau^{2(\alpha+1)}\sum_{n = 0}^N \|g\|^{2}_{ W^{\alpha,2}(t_{n},t_{n}+\tau)} \sum_{k\in\mathbb{Z}^d}(1+|k|^2)^{\sigma}   \left( \sum_{\substack{k_1,k_2,k_3\in\mathbb{Z}^d\\k=-k_1+k_2+k_3}}  |K||\hat{V}_K (t_n) |\right)^2.\\
   \end{align*}
   Using that
   \begin{align*}
   \sum_{k\in\mathbb{Z}^d} & (1+|k|^2)^{\sigma}   \left( \sum_{\substack{k_1,k_2,k_3\in\mathbb{Z}^d\\k=-k_1+k_2+k_3}}  |K||\hat{V}_K (t_n) |\right)^2 \\ & = 
     \sum_{k\in\mathbb{Z}^d}(1+|k|^2)^{\sigma}   \left( \sum_{\substack{k_1,k_2,k_3\in\mathbb{Z}^d\\k=-k_1+k_2+k_3}}  \vert 2 (k - k_2) \cdot (k - k_3)\vert \vert \overline{\hat{v}}_{k_1}(t_n)\vert \vert \hat{v}_{k_2}(t_n)\vert \vert \hat{v}_{k_3}(t_n)\vert \right)^2\\
   &  \lesssim    \sum_{k\in\mathbb{Z}^d}(1+|k|^2)^{\sigma+2}   \left( \sum_{\substack{k_1,k_2,k_3\in\mathbb{Z}^d\\k=-k_1+k_2+k_3}}  \vert \overline{\hat{v}}_{k_1}(t_n)\vert \vert \hat{v}_{k_2}(t_n)\vert \vert \hat{v}_{k_3}(t_n)\vert \right)^2  \lesssim \Vert v(t_n)\Vert_{\sigma+2}^6
   \end{align*}
   we obtain that
   \begin{align*}
\E \max_{M=0,\dots,N} \left\|x\mapsto \sum_{\substack{k, k_1, k_2, k_3 \in \mathbb{Z}^d\\ k = - k_1 + k_2 + k_3}}e^{i k\cdot  x} E^M_K\right\|_{\sigma}^2&\lesssim \tau^{2\alpha+1}\sup_{0 \leq t \leq T}\Vert v(t)\Vert_{\sigma+2}^6 \sum_{n = 0}^N \|g\|^{2}_{ W^{\alpha,2}(t_{n},t_{n}+\tau)}  \\
&\lesssim \tau^{2\alpha+1} \|g\|^{2}_{ W^{\alpha,2}(0,T)} \sup_{0 \leq t \leq T} \Vert v(t)\Vert_{\sigma+2}^6.
   \end{align*}
 This concludes the proof.
\end{proof}

\begin{remark}\label{rem:12}
Note that in the case of $g$ being a Brownian motion as studied in \cite{BdBD,CD}, $g$ is $\alpha$-H\"older continuous for $\alpha\in(0,1/2)$. Hence our analysis gives convergence rate $\alpha+1/2<1$. On the other hand, exploiting instead  It\^o's isometry 
together with the independence of the increments in \eqref{eq:err2}, our proof can be refined in order to establish the same order of convergence as in \cite{BdBD,CD}, namely 1. Moreover, this would lead to measuring the error in expectation and not only in probability.
\end{remark}

Combining Corollary \ref{cor:1} with Proposition \ref{prop:1}, we obtain the estimate for the global error in Theorem \ref{glob}.

\begin{proof}[Proof of Theorem \ref{glob}]
In the following we will derive the bound
\begin{equation}\label{boundV}
\left(\E\max_{M=0,\dots, N}\left\| v(t_M)-v^M\right\|_\sigma^2\right)^{1/2}\lesssim \tau^{\min\{1,\alpha+\frac{1}{2}\}}.
\end{equation}
The corresponding bound on $u(t_M)-u^M$ then follows by Corollary \ref{cor:1} together with the isometric property \eqref{uni}.

In the following we denote the error $v(t_n)-v^n$ by $e_n$, $n=0,\dots,N,$ and set
$
f(z) := \vert z\vert^2 z.
$
\kat The error in $v$ reads
\begin{align*}
e_{n+1}  :&= v(t_{n+1})-v^{n+1} = (v(t_{n+1})-v_*(t_{n+1}))+(v_*(t_{n+1})-v^{n+1}) \\
& = (v(t_{n+1})-v_*(t_{n+1}))+  (v(t_n) - v^n)\\
& \quad-i  \int_0^{\tau}
S(t_n+r)^{-1} f \left( S(t_n+r)v(t_n)\right) d r + i \tau S(t_n+\tau\xi_n)^{-1} f\left( S(t_n+\tau\xi_n)v^n\right),
\end{align*}
\kat where the  last equality is obtained by subtracting \eqref{scheme0} from \eqref{vast}. 

%
Next we use that
\[
f(z) = f(w) + (\overline{z-w}) z^2 + \overline{w}(z+w)(z-w),\qquad z,w\in\mathbb{C}.
\]
The above relation with $z = S(t_n+\tau\xi_n)v^n$ and $w = S(t_n+\tau\xi_n) v(t_n)$ yields that
\begin{align*}
e_{n+1} & = (v(t_{n+1})-v_*(t_{n+1}))+ e_n\\
&\quad -i \Big[  \int_0^{\tau}
S^{-1}(t_n+r) f \left( S(t_n+r)v(t_n)\right) d r \\&
\quad- \tau S^{-1}(t_n+\tau\xi_n) \Big\{
f\left( S(t_n+\tau\xi_n)v(t_n)\right) + \overline{S(t_n+\tau\xi_n)e_n}  z^2 + \overline{w}(z+w)S(t_n+\tau\xi_n) e_n
\Big\}\Big]\\
& = (v(t_{n+1})-v_*(t_{n+1}))+ e_n\\
 &\quad-i \Big[  \int_0^{\tau}
S^{-1}(t_n+r) f \left( S(t_n+r)v(t_n)\right) d r 
- \tau S^{-1}(t_n+\tau\xi_n) 
f\left( S(t_n+\tau\xi_n)v(t_n)\right)\Big]\\
&\quad + \tau \mathcal{B}_n e_n\\
& = (v(t_{n+1})-v_*(t_{n+1}))+ e_n-i L(v(t_n)) + \tau \mathcal{B}_n e_n,
\end{align*}
where
\[
L(v(t_n)) := \Big[  \int_0^{\tau}
S^{-1}(t_n+r) f \left( S(t_n+r)v(t_n)\right) d r 
- \tau S^{-1}(t_n+\tau\xi_n) 
f\left( S(t_n+\tau\xi_n)v(t_n)\right)\Big]
\]
and $\mathcal{B}_n$ denotes some bounded operator, depending on $v^n$, $v(t_n)$ but bounded uniformly in $\omega,\tau,n$ due to Remark \ref{rem:apriori}.
%
%
Iterating the above formula yields
\begin{align*}
e_M&=(v(t_{M})-v_*(t_{M}))+(v(t_{M-1})-v_*(t_{M-1}))+e_{M-2}-iL(v(t_{M-2}))+\tau\mathcal{B}_{M-2} e_{M-2}\\
&\quad-iLv(t_{M-1})+\tau\mathcal{B}_{M-1} e_{M-1}\\
&=\sum_{k=0}^{M-1}(v(t_{M-k})-v_*(t_{M-k}))+e_0-i\sum_{k=1}^ML(v(t_{M-k}))+\tau\sum_{k=1}^M\mathcal{B}_{M-k}e_{M-k}\\
&=\sum_{n=1}^{M}(v(t_{n})-v_*(t_{n}))+e_0-i\sum_{n=0}^{M-1}L(v(t_{n}))+\tau\sum_{n=0}^{M-1}\mathcal{B}_{n}e_{n}.
\end{align*}
Hence we estimate (using Corollary \ref{cor:1}, definition of the twisted variables \eqref{twist}, the isometric property \eqref{uni} and the fact that $e_0=0$)
\begin{align}\label{eq:r}
\begin{aligned}
&\E\max_{M=1,\dots, N}\left\|e_M\right\|_\sigma^2\lesssim \max_{M=1,\dots, N}\left\|\sum_{n=1}^{M}(v(t_{n})-v_*(t_{n}))\right\|_\sigma^2\\
&\quad+\E\max_{M=1,\dots, N}\left\|\sum_{n=0}^{M-1}L(v(t_{n}))\right\|_\sigma^2+\tau^2 \E\max_{M=1,\dots, N}\left\|\sum_{n=0}^{M-1}\mathcal{B}_{n}e_{n}\right\|_\sigma^2.
\end{aligned}
\end{align}
Note that the first term on the right hand side does not depend on $(\xi_n)$ and hence is independent of $\omega$. According to Corollary \ref{cor:1}, definition of the twisted variables \eqref{twist} and the isometric property \eqref{uni} it can be estimated by
\begin{align*}
\max_{M=1,\dots, N}\left\|\sum_{n=1}^{M}(v(t_{n})-v_*(t_{n}))\right\|_\sigma^2&\leq\left(\sum_{n=1}^{N}\left\|(v(t_{n})-v_*(t_{n}))\right\|_\sigma\right)^2\lesssim \tau^2.
\end{align*}
The last term on the right hand side of \eqref{eq:r} can be estimated as follows
\begin{align*}
\tau^2 \E\max_{M=1,\dots, N}\left\|\sum_{n=0}^{M-1}\mathcal{B}_{n}e_{n}\right\|_\sigma^2&\leq \tau^2\E\left(\sum_{n=0}^{N-1}\left\|\mathcal{B}_{n}e_{n}\right\|_\sigma\right)^2\lesssim \tau^2\E\left(\sum_{n=0}^{N-1}\left\|e_{n}\right\|_\sigma\right)^2\\
&\leq \tau^2 N \E\sum_{n=0}^{N-1}\left\|e_{n}\right\|^2_\sigma\lesssim \tau  \sum_{n=0}^{N-1}\E\max_{M=0,\dots,n}\left\|e_{M}\right\|^2_\sigma.
\end{align*}
Therefore, in view of Proposition 3.4 we estimate the second term on the right hand side of \eqref{eq:r} and obtain 
\begin{align*}
\E\max_{M=0,\dots, N}\left\|e_M\right\|_\sigma^2&\lesssim \tau^2+\tau^{2\alpha+1}+\tau  \sum_{n=0}^{N-1}\E\max_{M=0,\dots,n}\left\|e_{M}\right\|^2_\sigma.
\end{align*}
Finally, we apply the discrete Gronwall Lemma to deduce
\begin{align}\label{ev1}
\E\max_{M=0,\dots, N}\left\|e_M\right\|_\sigma^2&\lesssim \tau^2+\tau^{2\alpha+1},
\end{align}
where the proportional constant depends on $T$ but is independent of $\tau$ and $N$. This implies the estimate \eqref{boundV} as long as $\Vert v^n\Vert_\sigma$ is bounded.

Recall the twisted variables (cf. \eqref{twist})
$$
v(t) = S(t)^{-1}u(t), \quad v^n = S(t)^{-1}u^n
$$
such that in particular due to the isometric property  \eqref{uni} we can conclude that
$$
\Vert v(t)\Vert_\sigma = \Vert u(t)\Vert_\sigma, \qquad \Vert v^n\Vert_\sigma = \Vert u^n\Vert_\sigma
$$
\kat which thanks to Remark \ref{rem:apriori} implies the apriori boundedness of $\Vert v^n\Vert_\sigma$. 

Thanks to Lemma \ref{lem:iso} and Corollary \ref{cor:1} we  then in particular obtain that
\begin{align*}
\E\max_{M=0,\dots, N}\left\| u(t_M) - u^M \right\|_\sigma^2&\lesssim \tau ^2+\tau^{2\alpha+1}
\end{align*} 
which concludes the proof.
\end{proof}

\section{Numerical experiments} \label{sec:num}

In this section we numerically underline the theoretical convergence result of Theorem \ref{glob}.  Furthermore, we compare the convergence behavior of our newly derived randomized exponential integrator \eqref{scheme00} with a classical Strang splitting and exponential integration scheme. For the latter we refer to  \cite{CD,Faou12,HochOst10,Lubich08} and the references therein. 

The numerical experiments emphasize the favorable error behavior of our newly derived scheme over  classical integration methods in the presence of a non-smooth modulation $g$:  Whereas the error of the classical schemes oscillates widely, reaching  large errors, our proposed randomized exponential integrator \eqref{scheme00} averages these oscillations and maintains its convergence rate at low regularity allowing smaller  and in particular reliable errors without any oscillations.

 In our numerical experiments we choose the   classical exponential integrator
\begin{equation}\label{expint}
u^{n + 1}_E  = U(t_n+\tau,t_n) u^n_E
- i \tau U(t_n+\tau,t_n) \left(| u^n_E |^2  u^n_E\right)
\end{equation}
and Strang splitting scheme\begin{equation}\label{Strang}
\begin{aligned}
\psi^{n+1/2}_{-}&=e^{-i\frac{\tau}{2}|\psi^{n+1}|^2 }\psi^{n+1}\\
\psi^{n+1/2}_{+}&=e^{i\left(g\left(t_{n+1}\right)-g\left(t_n\right)\right)\partial_x^2}\psi^{n+1/2}_{-}\\
\psi^{n+1} &=e^{-i\frac{\tau}{2}|\psi^{n+1/2}_{+}|^2}\psi^{n+1/2}_{+}
\end{aligned}
\end{equation}
associated to the modulated Schr\"odinger equation \eqref{mnls}. In all numerical experiments we choose the initial value
\[
u_0(x)= \frac{\mathrm{cos}(x)}{2-\mathrm{sin}(x)}
\]
and use a standard Fourier pseudospectral method for the space discretization where we choose the largest Fourier mode $K = 2^{7}$ (i.e., the spatial mesh size $\Delta x =  0.049$). 
In the sequel, we use the notation $g$ in $W^{\alpha-,2}$ if $g \in W^{\gamma,2}$ for every $\gamma < \alpha$.
To simulate   modulations $g\in   W^{\alpha-,2}$\black, we first choose uniformly distributed random numbers in the interval $[-1,1]$, from which we take the discrete Fourier transform. These Fourier coefficients are then divided by $(1+|k|)^{\alpha +\frac{1}{2}}$ for $k=-\frac{N}{2},...,\frac{N}{2}-1$ and transformed back with the inverse Fourier transform and {normalized}. In Figure \ref{fig_g} below we illustrate the behavior of $g$ in  two cases: $g \in W^{\frac{1}{2}-,2}$ and $g\in W^{\frac{1}{10}-,2}$. \black
\begin{figure}[h!]
    \begin{subfigure}[b]{0.48\textwidth}
        \includegraphics[width=1\textwidth]{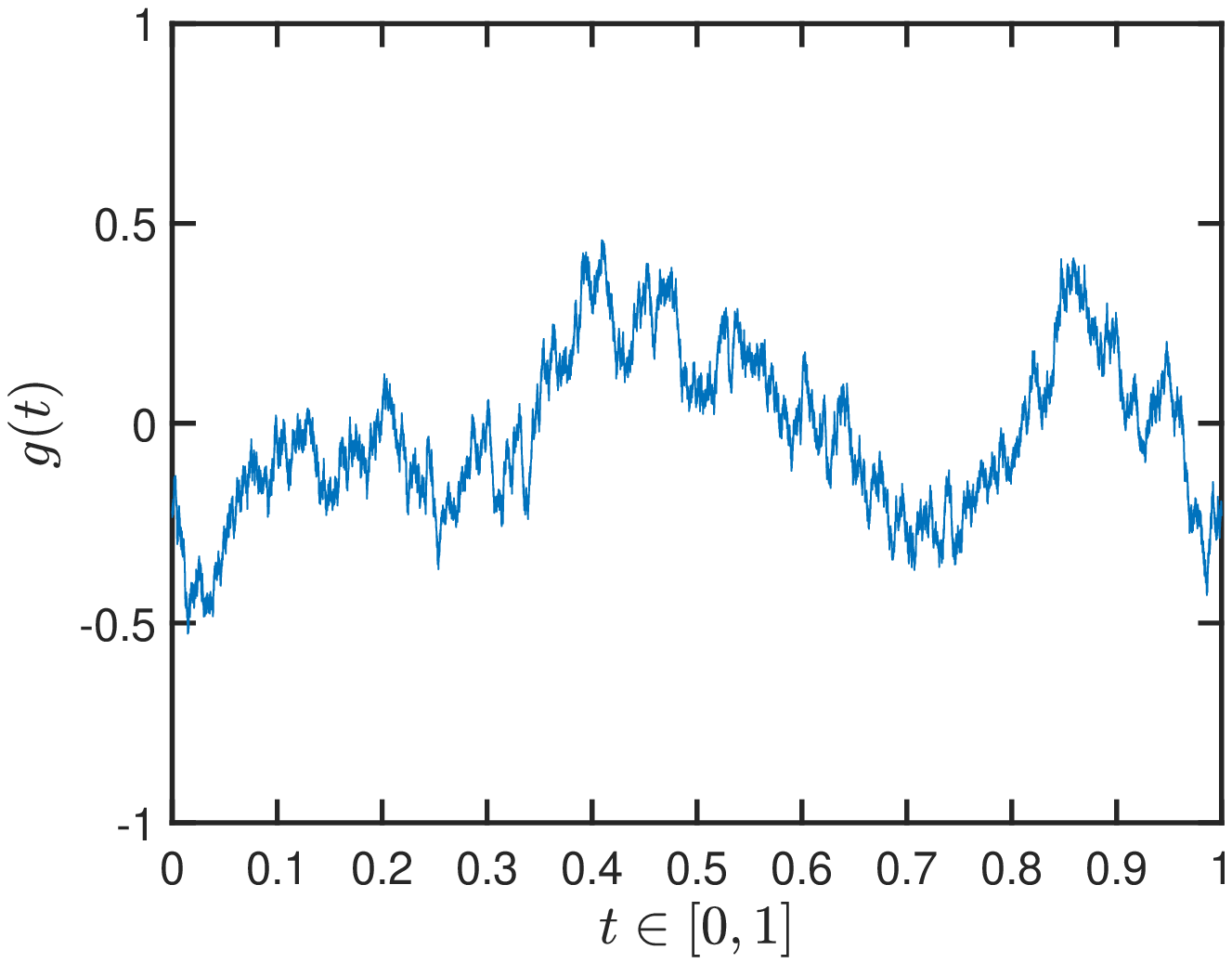}\vskip0.05cm
        \caption{$ g\in W^{\frac{1}{2}-,2}$}
   \end{subfigure}\hfill
       \begin{subfigure}[b]{0.48\textwidth}
        \includegraphics[width=1\textwidth]{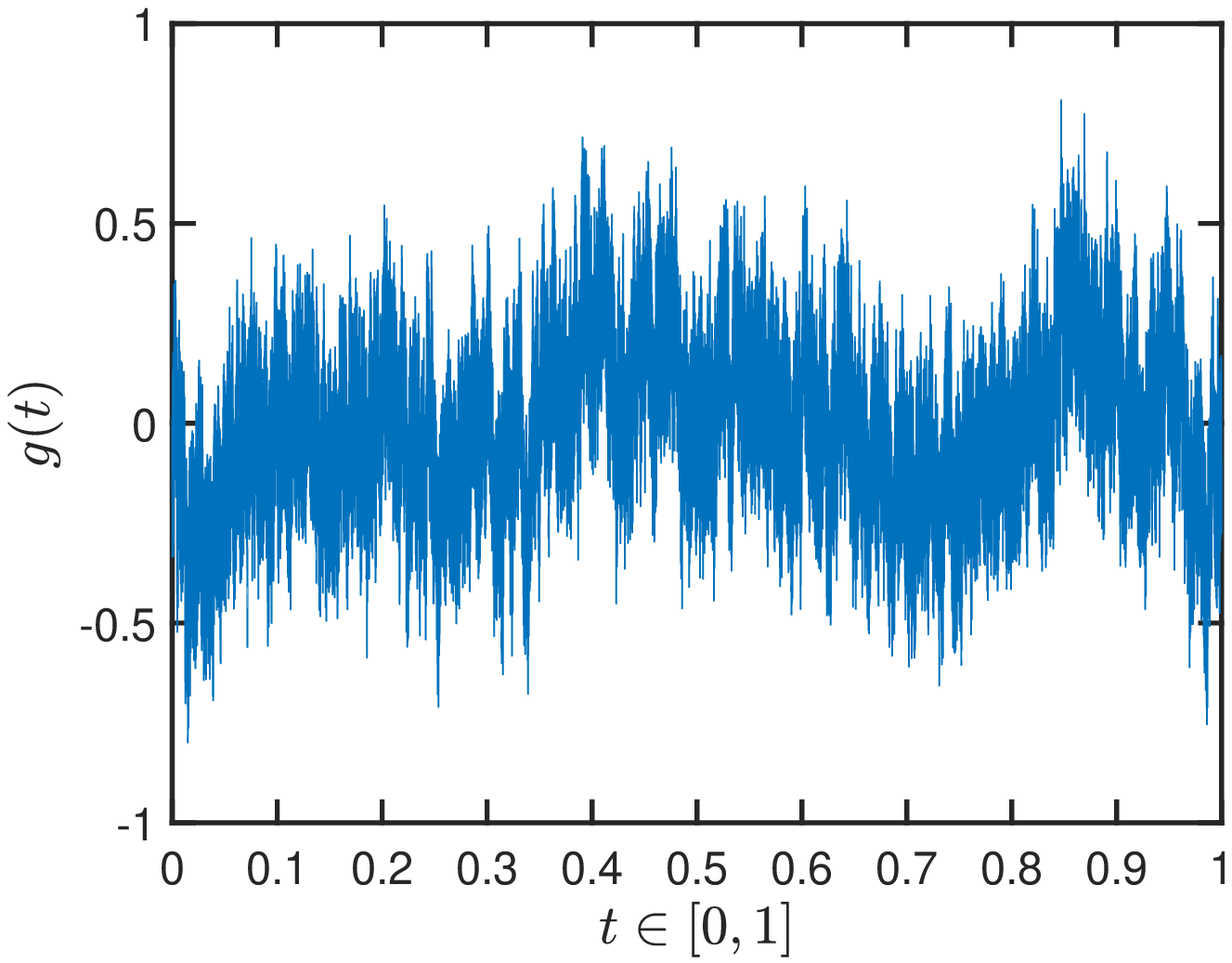}
       \caption{$ g\in W^{\frac{1}{10}-,2}$}
    \end{subfigure}
    \caption{Illustration of the behavior of the modulation function $g\in W^{\alpha-,2}$ in case of $\alpha = 1/2$ and $\alpha = 1/10$.}\label{fig_g}\vskip-0.2cm
   \end{figure}
Moreover we consider a smooth example, where $g(t)=\mathrm{sin}(t)$.
In the smooth setting $g(t)= \mathrm{sin}(t)$ we take as a reference solution the Strang splitting scheme with a very small time step size. In the less regular case of a  function $g\in  W^{\alpha-,2}$, i.e., for $\alpha = 1/2$, $\alpha = 1/4$ or $\alpha=1/10$ we take as a reference solution the schemes themselves with a very small time step size.\\
To compute the error $(\E\max_{M=0,\dots, N}\left\|e_M\right\|_1^2)^{1/2}$  (see Theorem \ref{glob}) of our randomized exponential integrator \eqref{scheme00}  we proceed as follows: We denote by  $u^{\mathrm{ref}}(1)$ the reference solution at time $T=1$ and by $u_{(\xi_i)_i^k}(1)$   the approximation computed with the randomized exponential integration scheme \eqref{scheme00}, by using the sequence $(\xi_i)_i^k$. By taking $m \in \mathbb{N}$ sequences, we now use the approximation
\begin{align*}
\left(\E\max_{M=0,\dots, N}\left\|e_M\right\|_1^2\right)^{\frac{1}{2}}\approx \left(\frac{1}{m}\sum_{k=1}^m{\left\|u^{\mathrm{ref}}(1)-u_{(\xi_i)_i^k}(1)\right\|_1^2}\right)^{\frac{1}{2}},
\end{align*}
where the derivative is computed by means of the Fourier transform.  Note that the new scheme can be easily implemented and allows for a parallelization in the sequence.\\
For the Strang splitting and the exponential integrator we compute the classical error in a discrete  $H^1$ norm.

\begin{figure}[h!]
    \begin{subfigure}[b]{0.48\textwidth}
        \includegraphics[width=1\textwidth]{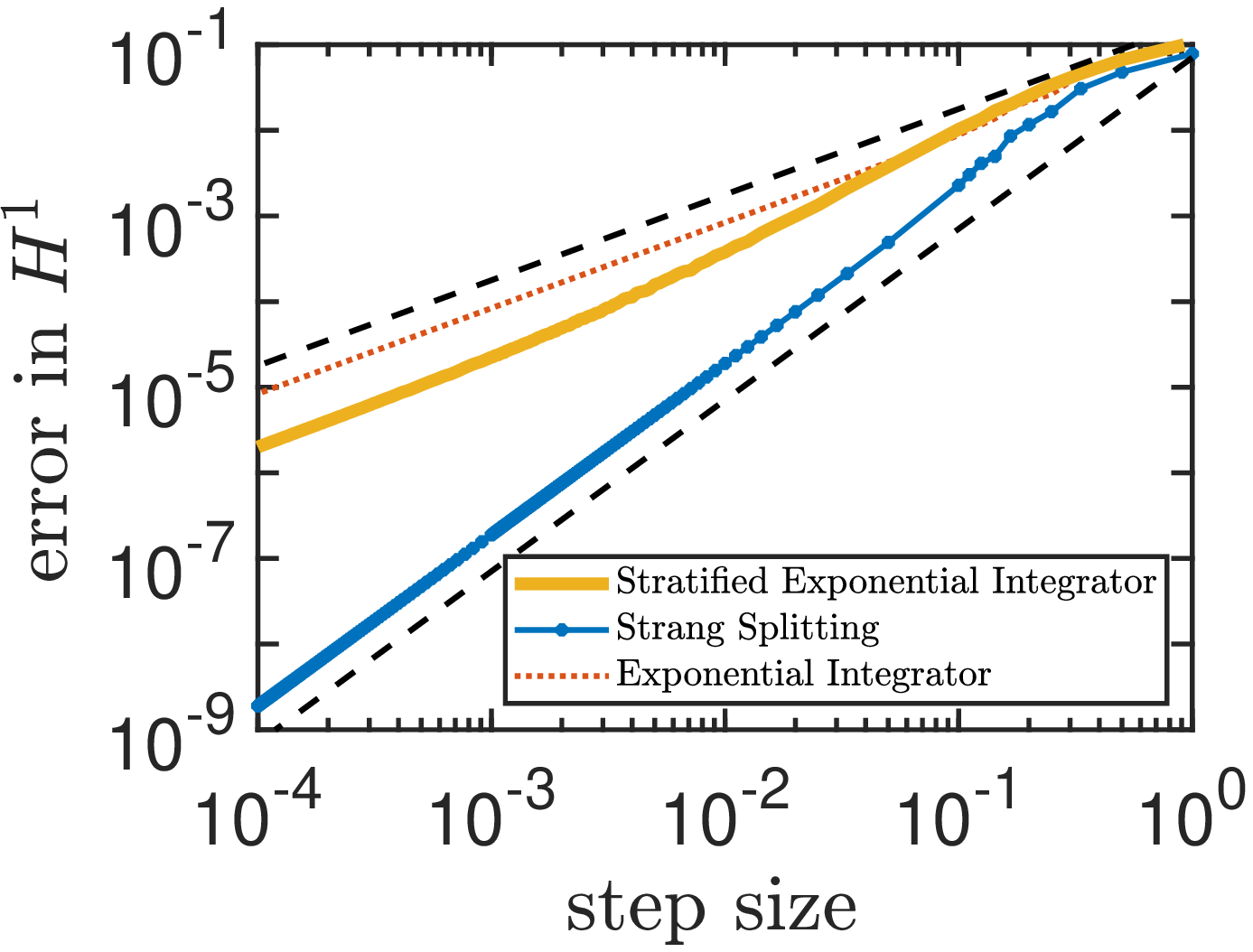}\vskip0.05cm
        \caption{$g(t)=\mathrm{sin}(t)$. The slope of the dashed lines is one and two, respectively.}
   \end{subfigure}\hfill
       \begin{subfigure}[b]{0.48\textwidth}
        \includegraphics[width=1\textwidth]{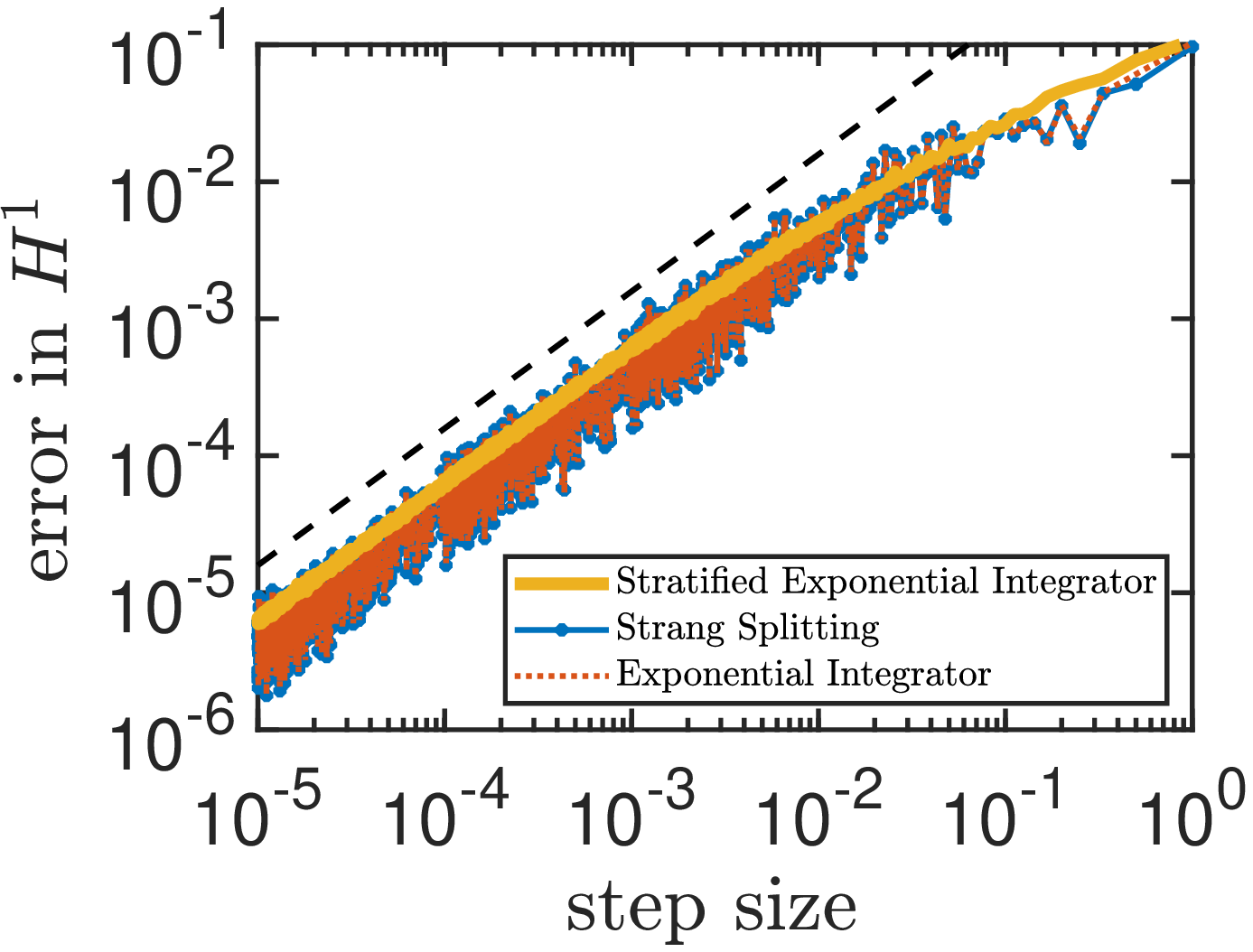}
       \caption{$g\in W^{\frac{1}{2}-,2}$. The slope of the dashed line is $\alpha+1/2 = 1$.}
    \end{subfigure}
    \caption{Convergence plot of the  Strang splitting scheme \eqref{Strang}, the classical exponential integrator \eqref{expint} and the randomized exponential integrator \eqref{scheme00} with $100$ sequences in the case of a smooth function $g$ (left) and a non-regular function $g\in W^{\frac{1}{2}-,2}$  (right).}\label{fig1}\vskip-0.2cm
   \end{figure}

\begin{figure}[h!]
    \begin{subfigure}[b]{0.48\textwidth}
        \includegraphics[width=1\textwidth]{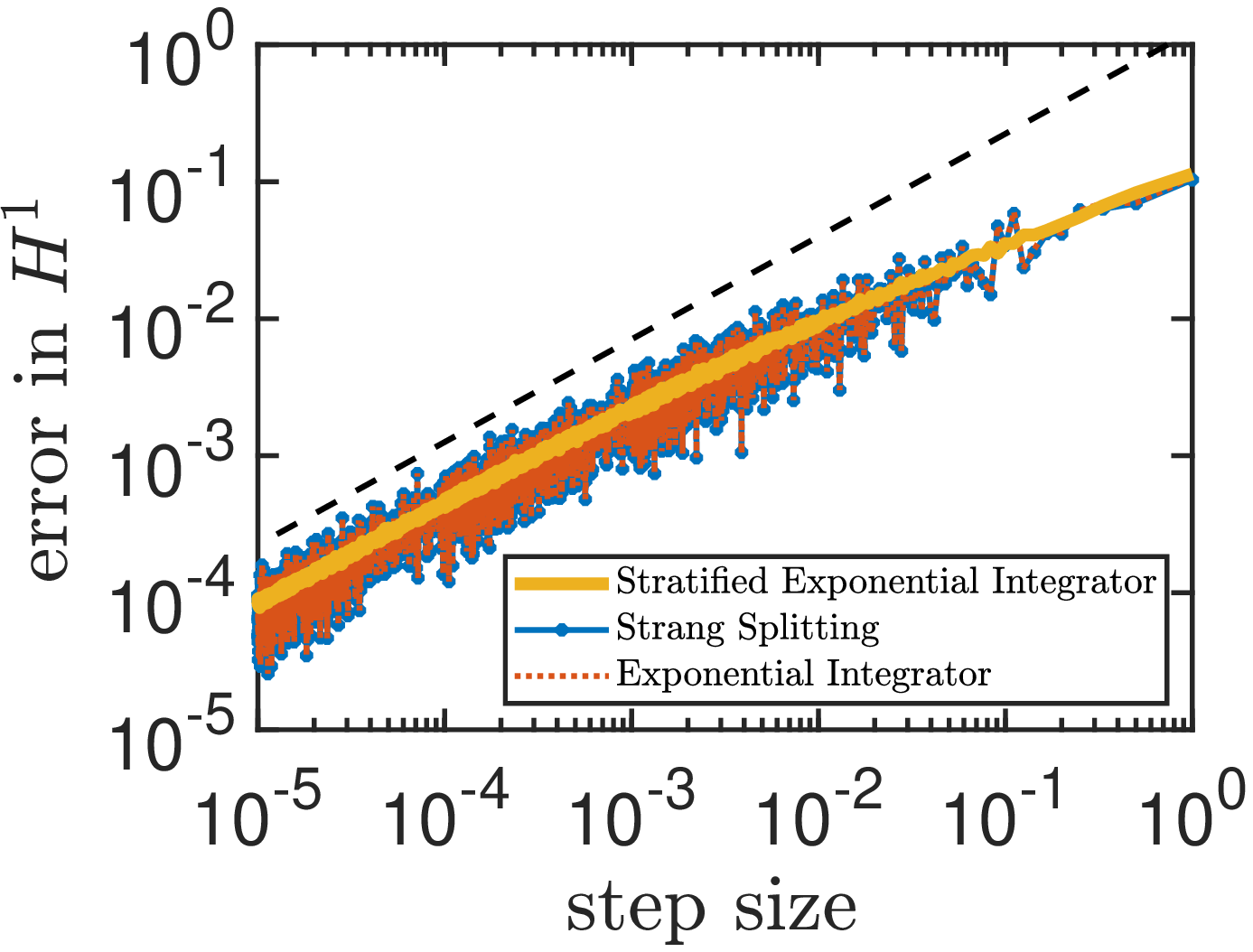}\vskip0.05cm
        \caption{$g\in W^{\frac{1}{4}-,2}$. The slope of the  dashed line is $\alpha+1/2=3/4$.}
   \end{subfigure}\hfill
       \begin{subfigure}[b]{0.48\textwidth}
        \includegraphics[width=1\textwidth]{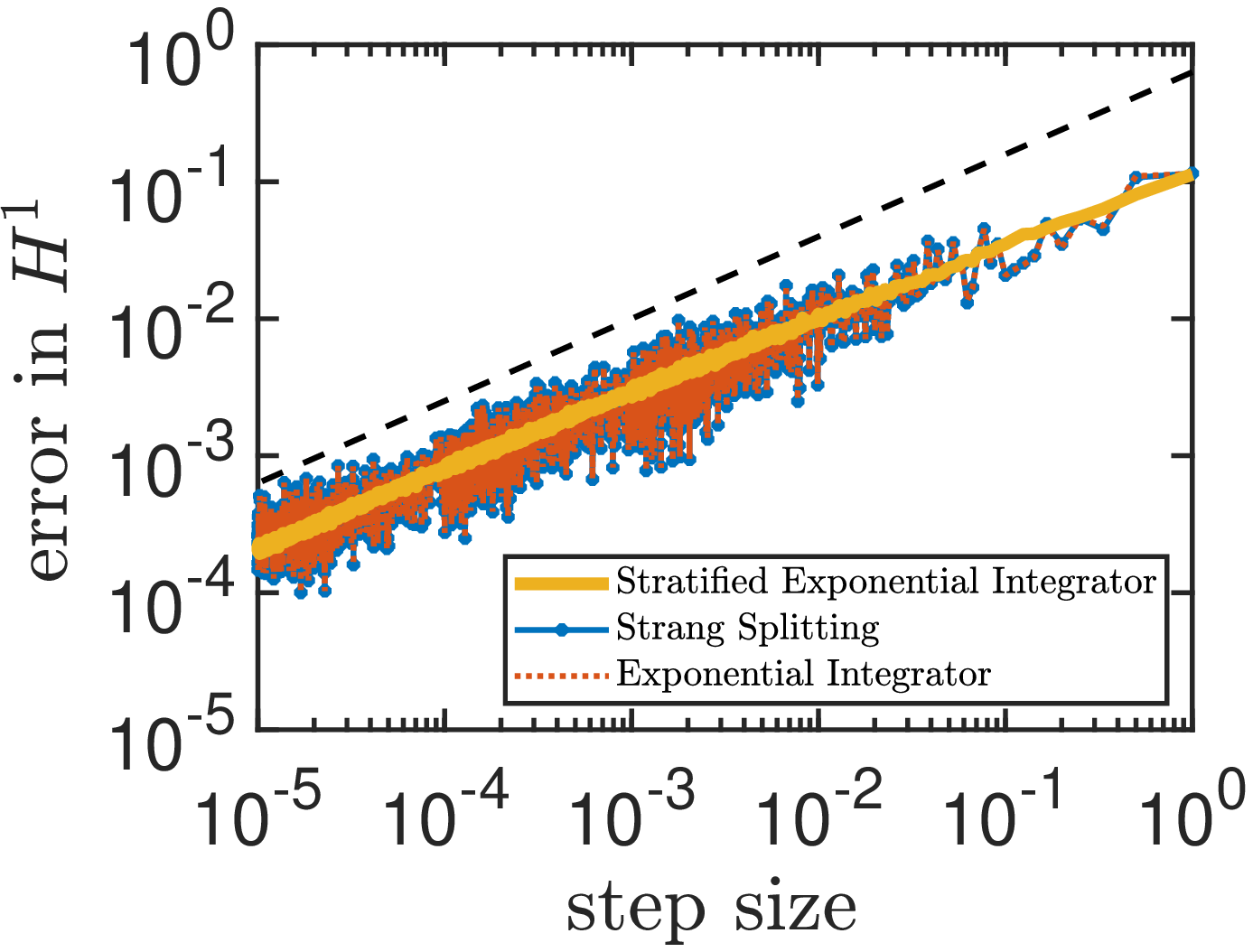}
       \caption{$g\in W^{\frac{1}{10}-,2}$. The slope of the dashed line is $\alpha+1/2=3/5$.}
    \end{subfigure}
    \caption{Convergence plot of the  Strang splitting scheme \eqref{Strang}, the classical exponential integrator \eqref{expint} and the randomized exponential integrator \eqref{scheme00} with $100$ sequences in the case of a function $g\in W^{\alpha-,2}$   with different parameters $\alpha$.}\label{fig1}\vskip-0.2cm
   \end{figure}



\section*{Acknowledgement}
The authors gratefully acknowledge financial support by the Deutsche Forschungsgemeinschaft (DFG) through CRC 1173.

\end{document}